\documentclass[12pt]{amsart}

\usepackage{amsmath,amssymb}
\usepackage{graphics,epsfig,color}

\newtheorem{theorem}{Theorem}[section]
\newtheorem{corollary}{Corollary}[section]

\newtheorem{remark}{Remark}[section]

\begin{document}

\title[Shape differentiability of Lagrangians]
{Shape differentiability of Lagrangians and application to Stokes problem}

\author{V.A. Kovtunenko$^{\dagger}$ and K. Ohtsuka$^{\ddagger}$}

\thanks{$^{\dagger}$ 
Institute for Mathematics and Scientific Computing, 
Karl-Franzens University of Graz, NAWI Graz, 
Heinrichstr.36, 8010 Graz, Austria; 
Lavrent'ev Institute of Hydrodynamics, 
Siberian Division of Russian Academy of Sciences, 
630090 Novosibirsk, Russia, 
Email: victor.kovtunenko@uni-graz.at}

\thanks{$^{\ddagger}$ 
Faculty of Information Design and Sociology, Hiroshima Kokusai Gakuin University, 
6-20-1, Aki-ku, Hiroshima, 739-0321, Japan, 
Email: ohtsuka@hkg.ac.jp}

\subjclass{49J40, 49Q12, 49J35, 35Q30}

\keywords{Shape derivative, velocity method, constrained minimization, 
primal and dual cone, Lagrangian, primal-dual minimax problem, 
Stokes problem}

\begin{abstract}
A class of convex constrained minimization problems over polyhedral 
cones for geometry-dependent quadratic objective functions 
is considered in a functional analysis framework. 
Shape differentiability of the primal minimization problem 
needs a bijective property for mapping of the primal cone. 
This restrictive assumption is relaxed to bijection of the dual cone 
within the Lagrangian formulation as a primal-dual minimax problem. 
In this paper, we give results on primal-dual shape sensitivity analysis 
that extends the class of shape-differentiable problems 
supported by explicit formula of the shape derivative.
We apply the results to the Stokes problem under mixed Dirichlet--Neumann 
boundary conditions subject to the divergence-free constraint. 
\end{abstract}

\maketitle

\section{Introduction}\label{sec1}

We aim at shape differentiability for a class of convex constrained 
minimization problems over polyhedral cones,  
where the objective functions are assumed quadratic and depend 
on a geometry. 

Typical examples are contact problems in solid mechanics, see 
\cite{KS/97,KO/88}, and other elliptic partial differential equations 
in variable domains with equality and inequality type constraints, 
see \cite{HIKKP/09,KK/14,MKAOA/15,OK/12}. 
Our special interest concerns nonlinear crack problems 
in fracture mechanics due to non-penetration between crack faces, 
which are developed in \cite{IKR/17,IKT/11,KK/00} 
and other works by the authors. 
By this, shape variations may imply regular perturbations 
along a predefined crack path, see \cite{AOK/14,IKRT/12,KOS/02}, 
as well as singular perturbations due to kink of the crack, 
see \cite{KKT/08,KKT/10}. 
A recent result of \cite{KL/16} concerns shape-topological control  
by posing a small defect in the cracked domain. 

From the point of view of shape and topology optimization, 
a shape sensitivity analysis of the problem is performed 
with the help of the velocity method. 
Introducing a proper kinematic velocity, see e.g. \cite{KKR/09}, 
a general perturbation of quadratic constrained minimization problems 
over convex cones in Hilbert spaces is established in \cite{HK/11}. 
An explicit formula of the shape derivative is provided 
by bijective properties of the velocity-based diffeomorphic flow of a geometry. 
However, this result restricts the primal cone to be a bijection within the flow. 
The bijection fails for constraints involving normal on curves 
(e.g. Signorini conditions), having integral, gradient, divergence operator, etc. 
This is rather restrictive, even not a complete list.

In the case of Signorini-type constraints imposed on curvilinear 
manifolds implying cracks, the shape differentiability result is improved in 
\cite{LR/14,Rud/04,Shch/17} relying on a $\Gamma$-convergence of the primal cones. 
For this specific problem, in \cite{Kov/06,KK/07} the assumption of bijection 
is relaxed further to the dual cone within a Lagrangian formalism. 
See another specific example of shape sensitivity of a Lagrangian 
associated with inhomogeneous Dirichlet problem in \cite{DZ/06}, 
and the general Lagrangian method together with related 
primal-dual minimax problems in \cite{IK/08}. 

For other example of such a non-bijective primal cone, in the present work 
we consider a Stokes problem under mixed Dirichlet--Neumann boundary 
conditions subject to the diver\-gence-free constraint. 
We refer to \cite{Cia/13,GR/86,Lad/69} for the Stokes problems,  
and to \cite{BHMS/09,HSS/14} for its shape sensitivity. 
It is worth to stress that the divergence-free constraint is not preserved by transport. 
The treatment of the incompressibility within the dynamical 
shape control of Navier--Stokes equations is discussed in 
\cite[Section~5]{MZ/06}. 
It employs special transforms (Piola transformation, transverse map), 
a hold-all domain assumption, but has a lack of rigorous 
mathematical justification \cite[p.142]{MZ/06}.

In Section~\ref{sec2} we develop  our concept of the shape 
differentiability of Lagrangians in a functional analysis framework. 
Based on the Lagrangian setting which implies a primal-dual minimax problem, 
we relax the bijection assumption from the primal cone 
$K$ (in the space of primal variable) to the dual cone 
$K^\star$ (in the space of dual variable) (see \eqref{19c}). 
This relaxation allows us to lead the primal-dual shape sensitivity analysis 
and to obtain the shape derivative explicitly. 
The improvement of the previous shape sensitivity results 
is attained with respect to non-bijective primal cones,  
thus extending the class of shape-differentiable problems.  

It is important to put our investigation in the classic context 
of optimal value functions adopted in optimization. 
The directional differentiability of optimal value Lagrangians in 
abstract formulation was established in \cite{CS/85} 
(see also \cite[Chapter~4.3.2]{BS/00}), and extended to the 
shape optimization framework in \cite{DZ/88}. 
For a concept of directional differentiability of metric projections 
onto polyhedric sets corresponding to shape derivatives 
we refer to \cite{LSZ/14} and references therein.  

The abstract optimal value Lagrangian function used for shape optimization 
in a time-dependent domain $\Omega_{t}$ with parameter $t$ 
can be defined by a general map of the form: 
\begin{equation}\label{OVF}
\tag{OVF}
\mathbb{R}\mapsto \mathbb{R},\quad 
t\mapsto \mathcal{L}(u_{t},\lambda_{t}; \Omega_{t}), 
\end{equation}
where a saddle point $(u_{t},\lambda_{t})\in V(\Omega_{t})\times 
K^\star(\Omega_{t})$ satisfies 
\begin{multline}\label{SP}
\tag{SP}
\mathcal{L}(u_{t}, p;\Omega_{t}) 
\le\mathcal{L}(u_{t},\lambda_{t};\Omega_{t}) \le\mathcal{L}(w, \lambda_{t};\Omega_{t})\\
\forall (w,p)\in V(\Omega_{t})\times K^\star(\Omega_{t})
\end{multline}
for a Lagrangian 
\begin{equation}\label{L}
\tag{L}
(u,\lambda)\mapsto \mathcal{L}(u,\lambda; \Omega_{t}): \; 
V(\Omega_{t})\times K^\star(\Omega_{t})\mapsto \mathbb{R},
\end{equation}
defined over topological vector spaces $V(\Omega_{t})$ and 
$K^\star(\Omega_{t})$ (the upper star to be explained later on). 
The aim is to find the {\em directional derivative}: 
\begin{equation}\label{DD}
\tag{DD}
\partial_t \mathcal{L}(u_{t},\lambda_{t}; \Omega_{t}) :=
\lim_{s\to0} {\textstyle\frac{\mathcal{L}(u_{t+s},\lambda_{t+s}; 
\Omega_{t+s}) -\mathcal{L}(u_{t},\lambda_{t};\Omega_{t})}{s}}. 
\end{equation}

Since the perturbed optimal value function 
$\mathcal{L}(u_{t+s},\lambda_{t+s}; \Omega_{t+s})$ in \eqref{DD} 
is given by the perturbed Lagrangian 
\begin{equation}\label{PL}
\tag{PL}
(v,\mu)\mapsto \mathcal{L}(v,\mu; \Omega_{t+s}):\; 
V(\Omega_{t+s})\times K^\star(\Omega_{t+s}) \mapsto \mathbb{R},
\end{equation}
which is defined over $s$-dependent spaces 
$V(\Omega_{t+s})\times K^\star(\Omega_{t+s})$, then the 
usual trick in shape optimization is to use a coordinate transformation 
\begin{equation}\label{CT}
\tag{CT}
\phi_s:\Omega_{t}\mapsto \Omega_{t+s},\quad 
\phi^{-1}_s:\Omega_{t+s}\mapsto \Omega_{t}
\end{equation}
that maps \eqref{PL} to a transformed perturbed Lagrangian 
\begin{equation}\label{TPL}
\tag{TPL}
(s,u,\lambda)\mapsto \mathcal{L}_s(u,\lambda; \Omega_{t}):\; 
\mathbb{R}\times V(\Omega_{t})\times K^\star(\Omega_{t}) \mapsto \mathbb{R}
\end{equation}
over fixed spaces $V(\Omega_{t})\times K^\star(\Omega_{t})$ 
such that $\mathcal{L}_0 =\mathcal{L}$ and 
\begin{equation}\label{BL}
\tag{BL}
\mathcal{L}_s(v\circ\phi_s,\mu\circ\phi_s; \Omega_{t})
=\mathcal{L}(v,\mu; \Omega_{t+s})
\end{equation} 
for all $(v,\mu)\in V(\Omega_{t+s})\times K^\star(\Omega_{t+s})$. 
This needs the fulfillment of bijective property between the function spaces
\begin{multline}\label{BS}
\tag{BS}
[v\mapsto v\circ \phi_s]: 
V(\Omega_{t+s})\mapsto V(\Omega_{t}),\\
[\mu\mapsto \mu\circ \phi_s]: 
K^\star(\Omega_{t+s})\mapsto K^\star(\Omega_{t})
\end{multline}
and allows to rewrite \eqref{DD} in the equivalent form: 
\begin{equation}\label{DD'}
\tag{DD'}
\partial_s \mathcal{L}_s(0,u_{t},\lambda_{t}; \Omega_{t}) 
=\lim_{s\to0} {\textstyle\frac{\mathcal{L}_s 
(u_{t+s}\circ\phi_s,\lambda_{t+s}\circ\phi_s; \Omega_{t}) 
-\mathcal{L}(u_{t},\lambda_{t};\Omega_{t})}{s}}. 
\end{equation}
The bijection \eqref{BS} is central in this work. 

In the constrained optimization context, $K^\star$ is associated 
to a dual cone compared with its primal counterpart $K$. 
For the divergence-free constraint, in Section~\ref{sec3} 
we give an example of the space $K^\star(\Omega_{t+s})$ 
where the bijection of dual cones (see \eqref{19c}) fails. 
Namely, considering Stokes problem under no-slip Dirichlet condition, 
the integral identity $\int_{\Omega_{t+s}} v(y) \,dy =0$ 
characterizing the space $L^2_0(\Omega_{t+s})$ (see \eqref{L20}) 
is not preserved by the transport $y =\phi_s(x)$ in general, 
thus,  the equivalence between \eqref{DD} and \eqref{DD'} is not true. 
A possible remedy is to use special area-preserving maps. 
In the current paper, we suggest to consider the Stokes problem 
under mixed Dirichlet--Neumann boundary conditions such that 
the bijection property \eqref{BS} holds true. 

\section{Shape derivative of Lagrangians for polyhedral cones}\label{sec2}

We start the investigation with a family of {\em time-dependent geometric sets} 
$t\mapsto \Omega_{t} \subset \mathbb{R}^d$, $d\in\mathbb{N}$. 

For every fixed time $t\in\mathbb{R}$, we consider two 
{\em geometry-dependent Hilbert spaces} $V(\Omega_{t})$ and $H(\Omega_{t})$ 
with the {\em dual spaces} $V^\star(\Omega_{t})$ and $H^\star(\Omega_{t})$. 
Let a linear operator $A: V(\Omega_{t})\mapsto 
V^\star(\Omega_{t})$ be {\em strongly monotone} such that 
\begin{equation}\label{1}
\langle Au, u \rangle_{\Omega_{t}}\ge \underline{c}_A 
\|u\|^2_{V(\Omega_{t})},\quad \underline{c}_A>0,\quad u\in V(\Omega_{t})
\end{equation}
with the duality pairing $\langle \,\cdot\,, \,\cdot\, \rangle_{\Omega_{t}}$ 
between $V^\star(\Omega_{t})$ and $V(\Omega_{t})$, 
and {\em continuous} such that 
\begin{equation}\label{2}
\|Au\|_{V^\star(\Omega_{t})}\le \overline{c}_A \|u\|_{V(\Omega_{t})},
\quad \overline{c}_A\ge \underline{c}_A>0,\quad u\in V(\Omega_{t})
\end{equation}
uniformly in a time interval $t\in(t_0,t_1)$ with fixed $t_0<t_1$. 
Let a linear operator $B: V(\Omega_{t})\mapsto H(\Omega_{t})$ 
be {\em surjective} 
(i.e. for every $\zeta\in H(\Omega_{t})$ there is at least one $u\in V(\Omega_{t})$ 
such that $Bu =\zeta$) 
and {\em continuous} with the following estimate  
\begin{equation}\label{3}
\|Bu\|_{H(\Omega_{t})}\le \overline{c}_B \|u\|_{V(\Omega_{t})},
\quad \overline{c}_B>0,\quad u\in V(\Omega_{t})
\end{equation}
that holds uniformly for all $t\in(t_0,t_1)$. 

Using the order relation for measured functions in $H(\Omega_{t})$, 
we define the {\em primal cone} as a polyhedral cone as follows 
\begin{equation}\label{4}
K(\Omega_{t}) :=\{u\in V(\Omega_{t}) \vert \quad Bu\ge0\}
\end{equation}
which is convex and closed. 
For a stationary right-hand side $f$ such that 
$f\in \bigcap_{t\in(t_0,t_1)}V^\star(\Omega_{t})$, 
let the {\em geometry-dependent objective function} 
$\mathcal{E}: V(\Omega_{t})\mapsto \mathbb{R}$ be given by 
\begin{equation}\label{5}
\mathcal{E}(u;\Omega_{t}) :=
\langle {\textstyle\frac{1}{2}} Au -f, u \rangle_{\Omega_{t}}
\end{equation}
that is quadratic, bounded due to \eqref{2}, and coercive due to \eqref{1}. 

We consider the {\em primal constrained minimization problem}: 
Find $u_{t}\in K(\Omega_{t})$ such that 
\begin{equation}\label{6}
\mathcal{E}(u_{t};\Omega_{t}) =\min_{w\in K(\Omega_{t})} 
\mathcal{E}(w;\Omega_{t}).
\end{equation}
The unique solution to \eqref{6} exists and satisfies 
the first order {\em optimality condition} in the form of 
a variational inequality due to \eqref{5} and \eqref{6}: 
\begin{equation}\label{7}
\langle Au_{t} -f, w -u_{t} \rangle_{\Omega_{t}}\ge0
\quad\forall w\in K(\Omega_{t}) 
\end{equation}
which is a necessary and sufficient condition for \eqref{6}. 
For a general theory of pseudo-monotone variational inequalities 
see \cite{OG/15}. 

Now we define the {\em dual cone} (in the space of dual variable) as follows  
\begin{equation}\label{8}
K^\star(\Omega_{t}) :=\{\lambda\in H^\star(\Omega_{t}) \vert 
\quad (\lambda, Bu)_{\Omega_{t}}\ge0\quad\forall u\in K(\Omega_{t})\}
\end{equation}
where $( \,\cdot\,, \,\cdot\, )_{\Omega_{t}}$ stands for 
the duality pairing between $H^\star(\Omega_{t})$ and $H(\Omega_{t})$. 
It is important to note that, due to surjection of $B$, 
the dual cone in \eqref{8} can be restated equivalently in the form  
\begin{equation}\label{8'}
\tag{2.8'}
K^\star(\Omega_{t}) =\{\lambda\in H^\star(\Omega_{t}) \vert 
\quad (\lambda, \zeta)_{\Omega_{t}}\ge0\quad \forall \zeta\in H(\Omega_{t}),
\; \zeta\ge0\}.
\end{equation}
The corresponding {\em primal-dual minimax problem} reads: 
Find the pair $(u_{t},\lambda_{t})\in V(\Omega_{t})\times K^\star(\Omega_{t})$ 
such that 
\begin{equation}\label{9}
\mathcal{L}(u_{t},\lambda_{t};\Omega_{t}) =\min_{w\in V(\Omega_{t})} 
\max_{p\in K^\star(\Omega_{t})} \mathcal{L}(w, p;\Omega_{t})
\end{equation}
with  the {\em Lagrangian function} 
$\mathcal{L}: V(\Omega_{t})\times H^\star(\Omega_{t})\mapsto \mathbb{R}$ given by 
\begin{equation}\label{10}
 \mathcal{L}(u,\lambda;\Omega_{t}) :=\mathcal{E}(u;\Omega_{t})
-(\lambda, Bu)_{\Omega_{t}}. 
\end{equation}
Well-posedness and optimality properties of \eqref{9} 
are gathered in the following theorem. 

\begin{theorem}\label{theo1}
(i) There exists a solution of the minimax problem \eqref{9} which implies that 
$(u_{t},\lambda_{t})\in V(\Omega_{t})\times K^\star(\Omega_{t})$ 
is a saddle point: 
\begin{multline}\label{9'}
\tag{2.9'}
\mathcal{L}(u_{t}, p;\Omega_{t}) 
\le\mathcal{L}(u_{t},\lambda_{t};\Omega_{t}) \le\mathcal{L}(w, \lambda_{t};\Omega_{t})\\
\forall (w,p)\in V(\Omega_{t})\times K^\star(\Omega_{t})
\end{multline}
and satisfies the {\em primal-dual optimality conditions}: 
\begin{subequations}\label{11}
\begin{equation}\label{11a}
\langle Au_{t} -f, w \rangle_{\Omega_{t}} -(\lambda_{t}, B w)_{\Omega_{t}} =0
\quad\forall w\in V(\Omega_{t}) 
\end{equation}
\begin{equation}\label{11b}
(p -\lambda_{t}, Bu_{t} )_{\Omega_{t}} \ge0
\quad\forall p\in K^\star(\Omega_{t}). 
\end{equation}
\end{subequations}
The primal component $u_{t}\in K(\Omega_{t})$ 
is unique solution of the primal problem \eqref{6}. 
If the {\em Ladyzhenskaya--Babu\v{s}ka--Brezzi (LBB) condition} 
holds for $\lambda\in H^\star(\Omega_{t})$: 
\begin{equation}\label{12}
\sup_{u\in V(\Omega_{t})\slash\{0\}} {\textstyle\frac{ 
(\lambda, Bu)_{\Omega_{t}}}{\|u\|_{V(\Omega_{t})}}}
\ge \underline{c}_B \|\lambda\|_{H^\star(\Omega_{t})},
\quad 0<\underline{c}_B\le \overline{c}_B
\end{equation}
then the dual component $\lambda_{t}$ is unique. 

(ii) The optimal value objective function $t\mapsto\mathcal{E}(u_{t};\Omega_{t})$ 
defined by \eqref{6} and the optimal value Lagrangian function 
$t\mapsto\mathcal{L}(u_{t},\lambda_{t};\Omega_{t})$ given in \eqref{9} are equal: 
\begin{equation}\label{ovf}
\min_{w\in V(\Omega_{t})} \mathcal{E}(w;\Omega_{t}) =\min_{w\in V(\Omega_{t})} 
\max_{p\in K^\star(\Omega_{t})} \mathcal{L}(w, p;\Omega_{t}).
\end{equation}
\end{theorem}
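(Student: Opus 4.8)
The plan is to treat (i) constructively---building the saddle point $(u_t,\lambda_t)$ explicitly out of the primal solution, rather than invoking an abstract minimax theorem---and then to read (ii) off the complementarity identity that appears along the way.

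I would start from the primal problem \eqref{6}, which (as recalled just after \eqref{7}) has a unique solution $u_t\in K(\Omega_t)$ characterized by the variational inequality \eqref{7}, uniqueness coming from strict convexity of $\mathcal{E}(\cdot;\Omega_t)$, i.e.\ strong monotonicity of $A$ in \eqref{1}. Since $K(\Omega_t)$ is a cone through the origin, inserting $w=0$ and $w=2u_t$ in \eqref{7} yields the complementarity identity $\langle Au_t-f,u_t\rangle_{\Omega_t}=0$, after which \eqref{7} reduces to $\langle Au_t-f,w\rangle_{\Omega_t}\ge0$ for all $w\in K(\Omega_t)$.

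The core step is the construction of the multiplier. As $\ker B$ is a subspace contained in $K(\Omega_t)$, testing the reduced inequality with $\pm w$ for $w\in\ker B$ shows that $Au_t-f$ annihilates $\ker B$; surjectivity of $B$ makes $\mathrm{range}(B)=H(\Omega_t)$ closed, so by the closed range theorem $\mathrm{range}(B^\star)$ is the annihilator of $\ker B$, and hence $Au_t-f=B^\star\lambda_t$ for some $\lambda_t\in H^\star(\Omega_t)$, which is precisely \eqref{11a}; injectivity of $B^\star$, equivalent to the inf--sup bound \eqref{12}, makes this $\lambda_t$ unique. The reduced inequality then reads $(\lambda_t,Bu)_{\Omega_t}\ge0$ for all $u\in K(\Omega_t)$, i.e.\ $\lambda_t\in K^\star(\Omega_t)$ by \eqref{8}, while \eqref{11b} follows from $u_t\in K(\Omega_t)$ (so $(p,Bu_t)_{\Omega_t}\ge0$ for every $p\in K^\star(\Omega_t)$) together with $(\lambda_t,Bu_t)_{\Omega_t}=\langle Au_t-f,u_t\rangle_{\Omega_t}=0$.

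The saddle-point inequalities \eqref{9'} are then routine: $w\mapsto\mathcal{L}(w,\lambda_t;\Omega_t)$ is convex with derivative vanishing at $u_t$ by \eqref{11a}, giving the right inequality, and $\mathcal{L}(u_t,p;\Omega_t)-\mathcal{L}(u_t,\lambda_t;\Omega_t)=-(p-\lambda_t,Bu_t)_{\Omega_t}\le0$ by \eqref{11b} gives the left one, so $(u_t,\lambda_t)$ solves the minimax \eqref{9} with value $\mathcal{L}(u_t,\lambda_t;\Omega_t)$. Computing the inner maximum via \eqref{8'} and the bipolar theorem ($Bw\ge0$ being equivalent to $(p,Bw)_{\Omega_t}\ge0$ for all $p\in K^\star(\Omega_t)$) gives $\max_{p\in K^\star(\Omega_t)}\mathcal{L}(w,p;\Omega_t)=\mathcal{E}(w;\Omega_t)$ on $K(\Omega_t)$ and $+\infty$ off it; this identifies the primal component of any solution of \eqref{9} with the unique minimizer of \eqref{6}, and combined with the complementarity identity $\mathcal{L}(u_t,\lambda_t;\Omega_t)=\mathcal{E}(u_t;\Omega_t)-(\lambda_t,Bu_t)_{\Omega_t}=\mathcal{E}(u_t;\Omega_t)$ it yields \eqref{ovf}, proving (ii). The only genuinely delicate point is the closed-range/annihilator argument producing $\lambda_t$; the remainder is convexity bookkeeping and manipulation of the cones $K(\Omega_t)$ and $K^\star(\Omega_t)$.
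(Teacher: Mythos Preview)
Your proof is correct and follows a genuinely different route from the paper. The paper invokes an abstract minimax existence theorem (Kikuchi--Oden, Theorem~3.11) to produce a saddle point of $\mathcal{L}$ directly, then reads off the optimality conditions \eqref{11} and uses the bipolar theorem to conclude $u_t\in K(\Omega_t)$; part~(ii) is obtained exactly as you do, by testing \eqref{11b} with $p=0$ and $p=2\lambda_t$ to get the complementarity $(\lambda_t,Bu_t)_{\Omega_t}=0$. You instead start from the primal minimizer $u_t$ and \emph{construct} the multiplier: the closed-range argument (surjectivity of $B$ implies $\mathrm{range}\,B^\star=(\ker B)^\perp$, and $Au_t-f$ annihilates $\ker B$) produces $\lambda_t$ with $Au_t-f=B^\star\lambda_t$, after which the saddle-point inequalities and the cone membership $\lambda_t\in K^\star(\Omega_t)$ fall out by direct computation. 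Your approach is more self-contained and makes transparent exactly where surjectivity of $B$ enters (existence of $\lambda_t$) versus where the LBB condition enters (its uniqueness), whereas the paper's approach is shorter but leans on external references for the heavy lifting. Both use the bipolar theorem at the same spot---to identify the inner maximum over $K^\star(\Omega_t)$ with the indicator of $K(\Omega_t)$---and both finish part~(ii) via the same complementarity identity.
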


\begin{proof}
Indeed, based on \eqref{1}--\eqref{10}, existence of a solution 
to the minimax problem follows from e.g. \cite[Theorem~3.11]{KO/88}. 
The inclusion $u_{t}\in K(\Omega_{t})$ is a consequence of the bipolar theorem, 
see e.g. \cite[Theorem~14.1]{Roc/70}, due to surjection of $B$.
The optimality conditions \eqref{11} and the uniqueness assertion 
under LBB condition \eqref{12} are stated e.g. in \cite[Theorem~3.14]{KO/88}. 
The cone $K^\star(\Omega_{t})$ is convex and $V(\Omega_{t})$ is linear, 
the Lagrangian $\mathcal{L}$ is convex-concave and G\^ateaux differentiable, 
so that \eqref{11} is equivalent to (see \cite[Proposition~1.5]{ET/76}): 
\begin{equation*}
\begin{split}
&\langle \partial_{u} \mathcal{L}(u_{t},\lambda_{t};\Omega_{t}), 
w\rangle_{\Omega_{t}} =0 \quad\forall w\in V(\Omega_{t}),\\ 
&( \partial_{\lambda} \mathcal{L}(u_{t},\lambda_{t};\Omega_{t}), 
p -\lambda_{t})_{\Omega_{t}}\ge0 \quad\forall p\in K^\star(\Omega_{t}), 
\end{split}
\end{equation*}
and the pair $(u_{t},\lambda_{t}) \in V(\Omega_{t})\times K^\star(\Omega_{t})$ 
also satisfies \eqref{9'} implying the saddle point (see \cite[Definition~1.1]{ET/76}). 

To proof the assertion (ii), we test \eqref{11b} with $p =0$ and $p =2\lambda_{t}$ 
yielding $(\lambda_{t}, Bu_{t} )_{\Omega_{t}} =0$, hence 
$\mathcal{E}(u_{t};\Omega_{t}) =\mathcal{L}(u_{t},\lambda_{t};\Omega_{t})$ 
in turn implying \eqref{ovf}.
\end{proof}

In the following we lead a shape sensitivity analysis of the problem.

\subsection{Primal-dual shape sensitivity analysis}\label{sec2.1}

For fixed $t\in(t_0,t_1)$ and a small perturbation parameter 
$s\in(t_0-t, t_1-t)$, let given vector-functions 
\begin{subequations}\label{13}
\begin{equation}\label{13a}
[s\mapsto \phi_s],[s\mapsto\phi^{-1}_s]\in 
C^{1}([t_0-t, t_1-t];W^{1,\infty}_{\rm loc}(\mathbb{R}^d;\mathbb{R}^d))
\end{equation}
associate the {\em coordinate transformation} $y=\phi_s(x)$ and 
the inverse mapping $x=\phi^{-1}_s(y)$ such that its composition satisfies: 
\begin{equation}\label{13b}
(\phi^{-1}_s\circ \phi_s)(x)=x,\quad (\phi_s\circ \phi^{-1}_s)(y)=y. 
\end{equation}
\end{subequations}
Then the {\em shape perturbation} 
\begin{equation}\label{14}
\Omega_{t+s} :=\{y\in \mathbb{R}^d \vert \quad y=\phi_s(x),\; x\in \Omega_{t}\}
\end{equation}
builds the {\em diffeomorphism} 
\begin{equation}\label{15}
\phi_s:\Omega_{t}\mapsto \Omega_{t+s}, x\mapsto y;\quad 
\phi^{-1}_s:\Omega_{t+s}\mapsto \Omega_{t}, y\mapsto x.
\end{equation}

We reset the {\em perturbed primal constrained minimization problem}: 
Find $u_{t+s}\in K(\Omega_{t+s})$ such that 
\begin{equation}\label{16}
\mathcal{E}(u_{t+s};\Omega_{t+s}) =\min_{v\in K(\Omega_{t+s})} 
\mathcal{E}(v;\Omega_{t+s})
\end{equation}
and the corresponding {\em perturbed primal-dual minimax problem}: 
Find the pair $(u_{t+s},\lambda_{t+s})\in V(\Omega_{t+s})\times K^\star(\Omega_{t+s})$ 
such that 
\begin{equation}\label{17}
\mathcal{L}(u_{t+s},\lambda_{t+s};\Omega_{t+s})\\ 
=\min_{v\in V(\Omega_{t+s})} 
\max_{\mu\in K^\star(\Omega_{t+s})} \mathcal{L}(v,\mu;\Omega_{t+s})
\end{equation}
with the {\em perturbed Lagrangian and objective functions}, respectively: 
\begin{subequations}\label{18}
\begin{equation}\label{18a}
 \mathcal{L}(v,\mu;\Omega_{t+s}) =\mathcal{E}(v;\Omega_{t+s})
-(\mu, Bv)_{\Omega_{t+s}} 
\end{equation}
\begin{equation}\label{18b}
\mathcal{E}(v;\Omega_{t+s}) =
\langle {\textstyle\frac{1}{2}} Av -f, v \rangle_{\Omega_{t+s}}.
\end{equation}
\end{subequations}
They are defined for $v\in V(\Omega_{t+s})$ and 
$\mu\in H^\star(\Omega_{t+s})$ 
with the duality pairings $\langle \,\cdot\,, \,\cdot\, \rangle_{\Omega_{t+s}}$ 
between $V^\star(\Omega_{t+s})$ and $V(\Omega_{t+s})$, and 
$( \,\cdot\,, \,\cdot\, )_{\Omega_{t+s}}$ between 
$H^\star(\Omega_{t+s})$ and $H(\Omega_{t+s})$. 

Within the kinematic flow \eqref{13}--\eqref{15}, we employ 
the assumptions: The map $[v\mapsto v\circ \phi_s]$ 
is bijective in the function spaces
\begin{subequations}\label{19}
\begin{equation}\label{19a}
V(\Omega_{t+s})\mapsto V(\Omega_{t}),\quad
V^\star(\Omega_{t+s})\mapsto V^\star(\Omega_{t}),
\end{equation}
\begin{equation}\label{19b}
H(\Omega_{t+s})\mapsto H(\Omega_{t}),\quad
H^\star(\Omega_{t+s})\mapsto H^\star(\Omega_{t}),
\end{equation}
and $[\mu\mapsto \mu\circ \phi_s]$ is bijective in the dual cones 
\begin{equation}\label{19c}
K^\star(\Omega_{t+s})\mapsto K^\star(\Omega_{t}).
\end{equation}
As $s\to0$,  let the asymptotic representations hold for the operator $A$:
\begin{equation}\label{19d}
\langle Av, \chi \rangle_{\Omega_{t+s}} =\langle [A +s A^1 +A^2_s] 
(v\circ\phi_s), \chi\circ\phi_s \rangle_{\Omega_{t}}
\end{equation}
with linear bounded operators 
$A^1, A^2_s: V(\Omega_{t})\mapsto V^\star(\Omega_{t})$ 
and the residual $A^2_s$ such that 
\begin{equation}\label{19e}
\| A^2_s u \|_{V^\star(\Omega_{t})}\le c_{RA}(s) 
\| u \|_{V(\Omega_{t})},\quad 0\le c_{RA}(s)={\rm o}(s);
\end{equation}
for the operator $B$:
\begin{equation}\label{19f}
(\mu, Bv )_{\Omega_{t+s}} =(\mu\circ\phi_s, 
[B +s B^1 +B^2_s] (v\circ\phi_s) )_{\Omega_{t}}
\end{equation}
with linear bounded operators 
$B^1, B^2_s: V(\Omega_{t})\mapsto H(\Omega_{t})$ 
such that $B +s B^1 +B^2_s$ is surjective 
and the residual $B^2_s$ satisfies  
\begin{equation}\label{19g}
\| B^2_s u \|_{H(\Omega_{t})}\le c_{RB}(s) 
\| u \|_{V(\Omega_{t})},\quad 0\le c_{RB}(s)={\rm o}(s);
\end{equation}
and for the right-hand side $f$:
\begin{equation}\label{19h}
\langle f, v \rangle_{\Omega_{t+s}} =\langle f +s f^1 +f^2_s, 
v\circ\phi_s \rangle_{\Omega_{t}}
\end{equation}
with $f^1, f^2_s\in V^\star(\Omega_{t})$ and the residual $f^2_s$ such that 
\begin{equation}\label{19i}
\| f^2_s \|_{V^\star(\Omega_{t})}\le c_{Rf}(s),\quad 
0\le c_{Rf}(s)={\rm o}(s)
\end{equation}
for test-functions $v,\chi\in V(\Omega_{t+s})$, 
$\mu\in H^\star(\Omega_{t+s})$, $u\in V(\Omega_{t})$, 
uniformly for all $s\in(t_0-t, t_1-t)$ and $t\in(t_0,t_1)$. 
\end{subequations}

\begin{theorem}\label{theo2}
Under the assumptions \eqref{19}, the {\em optimal value function} 
$\mathbb{R}\mapsto \mathbb{R}, t\mapsto \mathcal{E}(u_{t};\Omega_{t})$ 
of the objective $ \mathcal{E}$ given in \eqref{5} and \eqref{6} 
is {\em shape differentiable} such that 
\begin{equation}\label{20}
{\textstyle\frac{d}{dt}} \mathcal{E}(u_{t};\Omega_{t}) :=
\lim_{s\to0} {\textstyle\frac{\mathcal{E}(u_{t+s};\Omega_{t+s}) 
-\mathcal{E}(u_{t};\Omega_{t})}{s}} 
=\mathcal{L}^1(u_{t},\lambda_{t};\Omega_{t})
\end{equation}
with the {\em shape derivative} $\mathcal{L}^1(u_{t},\lambda_{t};\Omega_{t})$ 
determined by 
\begin{subequations}\label{21}
\begin{equation}\label{21a}
\mathcal{L}^1(u,\lambda;\Omega_{t}) :=
\mathcal{E}^1(u;\Omega_{t}) -(\lambda, B^1 u )_{\Omega_{t}}
\end{equation}
\begin{equation}\label{21b}
\mathcal{E}^1(u;\Omega_{t}) :=
\langle {\textstyle\frac{1}{2}} A^1 u -f^1, u \rangle_{\Omega_{t}}.
\end{equation}
\end{subequations}
\end{theorem}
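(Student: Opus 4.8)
The plan is to reduce the difference quotient in \eqref{20} to saddle values of the Lagrangian, to transport the perturbed minimax problem back onto the fixed geometry $\Omega_t$, and then to squeeze the transported saddle value between two one-sided estimates, in the spirit of \cite{CS/85}. By Theorem~\ref{theo1}(ii), applied both on $\Omega_t$ and on $\Omega_{t+s}$, one has $\mathcal{E}(u_t;\Omega_t)=\mathcal{L}(u_t,\lambda_t;\Omega_t)$ and $\mathcal{E}(u_{t+s};\Omega_{t+s})=\mathcal{L}(u_{t+s},\lambda_{t+s};\Omega_{t+s})$, so the quotient in \eqref{20} equals $\tfrac1s(\mathcal{L}(u_{t+s},\lambda_{t+s};\Omega_{t+s})-\mathcal{L}(u_t,\lambda_t;\Omega_t))$. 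With the bijections \eqref{19a} and \eqref{19c} I set $u^s:=u_{t+s}\circ\phi_s\in V(\Omega_t)$, $\lambda^s:=\lambda_{t+s}\circ\phi_s\in K^\star(\Omega_t)$, and using \eqref{19d}, \eqref{19f}, \eqref{19h} I introduce the transformed perturbed Lagrangian
\[
\mathcal{L}_s(u,\lambda;\Omega_t):=\langle\tfrac12[A+sA^1+A^2_s]u-(f+sf^1+f^2_s),u\rangle_{\Omega_t}-(\lambda,[B+sB^1+B^2_s]u)_{\Omega_t},
\]
so that $\mathcal{L}_0=\mathcal{L}$ and $\mathcal{L}_s(v\circ\phi_s,\mu\circ\phi_s;\Omega_t)=\mathcal{L}(v,\mu;\Omega_{t+s})$ on the corresponding spaces. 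Since $A+sA^1+A^2_s$ is strongly monotone for $|s|$ small (by \eqref{1} and \eqref{19e}) and $B+sB^1+B^2_s$ is surjective (by \eqref{19f}--\eqref{19g}), Theorem~\ref{theo1} applies to $\mathcal{L}_s$; combined with \eqref{19a} and \eqref{19c} it shows that $(u^s,\lambda^s)$ is the saddle point of $\mathcal{L}_s$ over $V(\Omega_t)\times K^\star(\Omega_t)$. Consequently $(u^s,\lambda^s)$ satisfies the transported optimality conditions: the state equation $\langle[A+sA^1+A^2_s]u^s-(f+sf^1+f^2_s),w\rangle_{\Omega_t}=(\lambda^s,[B+sB^1+B^2_s]w)_{\Omega_t}$ for all $w\in V(\Omega_t)$, the complementarity $(\lambda^s,[B+sB^1+B^2_s]u^s)_{\Omega_t}=0$, and hence $(p,[B+sB^1+B^2_s]u^s)_{\Omega_t}\ge0$ for all $p\in K^\star(\Omega_t)$.

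Next I establish uniform bounds and convergence of $(u^s,\lambda^s)$. Testing the transported state equation with $w=u^s$ and using complementarity gives $\langle[A+sA^1+A^2_s]u^s,u^s\rangle_{\Omega_t}=\langle f+sf^1+f^2_s,u^s\rangle_{\Omega_t}$, whence \eqref{1}, \eqref{19e}, \eqref{19i} yield $\|u^s\|_{V(\Omega_t)}\le C$ uniformly in $s$. Surjectivity of $B$ furnishes a bounded right inverse which, by \eqref{19g}, remains a uniformly bounded right inverse of $B+sB^1+B^2_s$ for $|s|$ small; inserting it into the transported state equation gives $\|\lambda^s\|_{H^\star(\Omega_t)}\le C$ uniformly (alternatively, invoke the LBB condition \eqref{12}). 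Along any sequence $s\to0$ extract weak limits $u^s\rightharpoonup\bar u$ in $V(\Omega_t)$, $\lambda^s\rightharpoonup\bar\lambda$ in $H^\star(\Omega_t)$; here $\bar\lambda\in K^\star(\Omega_t)$ because that cone is weakly closed, and passing to the limit in the transported state equation and in the inequality $(p,[B+sB^1+B^2_s]u^s)_{\Omega_t}\ge0$ (the $A^2_s,B^2_s,f^2_s$ terms vanish by \eqref{19e}, \eqref{19g}, \eqref{19i}, and every remaining pairing has a fixed factor) shows that $(\bar u,\bar\lambda)$ satisfies \eqref{11a} and that $\bar u\in K(\Omega_t)$ (by \eqref{8'} and the bipolar theorem, as in Theorem~\ref{theo1}). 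Moreover $\langle Au^s,u^s\rangle_{\Omega_t}\to\langle f,\bar u\rangle_{\Omega_t}$ by the identity above, so combining with \eqref{1}, with \eqref{11a} for $(u_t,\lambda_t)$ and for $(\bar u,\bar\lambda)$, and with $\langle Au_t,u_t\rangle_{\Omega_t}=\langle f,u_t\rangle_{\Omega_t}$ (test \eqref{11a} with $w=u_t$ and recall $(\lambda_t,Bu_t)_{\Omega_t}=0$ from Theorem~\ref{theo1}) one obtains
\[
\underline{c}_A\limsup_{s\to0}\|u^s-u_t\|_{V(\Omega_t)}^2\le-(\lambda_t,B\bar u)_{\Omega_t}-(\bar\lambda,Bu_t)_{\Omega_t}\le0,
\]
the last inequality since $u_t,\bar u\in K(\Omega_t)$ and $\lambda_t,\bar\lambda\in K^\star(\Omega_t)$. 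Hence $u^s\to u_t$ strongly in $V(\Omega_t)$ and $\bar u=u_t$; then \eqref{11a} for $(u_t,\bar\lambda)$ and for $(u_t,\lambda_t)$ together with surjectivity of $B$ force $\bar\lambda=\lambda_t$. As $u_t$ and $\lambda_t$ are unique, the whole families converge: $u^s\to u_t$ strongly and $\lambda^s\rightharpoonup\lambda_t$ weakly, as $s\to0$.

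Finally I carry out the squeeze. The saddle-point inequalities for $(u^s,\lambda^s)$ with respect to $\mathcal{L}_s$ and for $(u_t,\lambda_t)$ with respect to $\mathcal{L}=\mathcal{L}_0$, together with $\lambda^s,\lambda_t\in K^\star(\Omega_t)$, give $\mathcal{L}_s(u^s,\lambda^s;\Omega_t)\le\mathcal{L}_s(u_t,\lambda^s;\Omega_t)$, $\mathcal{L}(u_t,\lambda_t;\Omega_t)\ge\mathcal{L}_0(u_t,\lambda^s;\Omega_t)$, $\mathcal{L}_s(u^s,\lambda^s;\Omega_t)\ge\mathcal{L}_s(u^s,\lambda_t;\Omega_t)$ and $\mathcal{L}(u_t,\lambda_t;\Omega_t)\le\mathcal{L}_0(u^s,\lambda_t;\Omega_t)$, hence
\[
\mathcal{L}_s(u^s,\lambda_t;\Omega_t)-\mathcal{L}_0(u^s,\lambda_t;\Omega_t)\le\mathcal{L}_s(u^s,\lambda^s;\Omega_t)-\mathcal{L}(u_t,\lambda_t;\Omega_t)\le\mathcal{L}_s(u_t,\lambda^s;\Omega_t)-\mathcal{L}_0(u_t,\lambda^s;\Omega_t).
\]
By the definition of $\mathcal{L}_s$, the left difference equals $s[\mathcal{E}^1(u^s;\Omega_t)-(\lambda_t,B^1u^s)_{\Omega_t}]+r^-(s)$ and the right difference equals $s[\mathcal{E}^1(u_t;\Omega_t)-(\lambda^s,B^1u_t)_{\Omega_t}]+r^+(s)$, where the remainders collect the $A^2_s,B^2_s,f^2_s$ contributions and satisfy $r^\pm(s)=\mathrm{o}(s)$ by the uniform bounds above and \eqref{19e}, \eqref{19g}, \eqref{19i}. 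Dividing by $s$ and letting $s\to0^+$ and $s\to0^-$, and using that $u^s\to u_t$ strongly (so the quadratic form in $\mathcal{E}^1$ passes to the limit) while $\lambda^s\rightharpoonup\lambda_t$ weakly (so $(\lambda^s,B^1u_t)_{\Omega_t}\to(\lambda_t,B^1u_t)_{\Omega_t}$), a squeeze from both sides gives $\tfrac1s(\mathcal{L}_s(u^s,\lambda^s;\Omega_t)-\mathcal{L}(u_t,\lambda_t;\Omega_t))\to\mathcal{E}^1(u_t;\Omega_t)-(\lambda_t,B^1u_t)_{\Omega_t}=\mathcal{L}^1(u_t,\lambda_t;\Omega_t)$, which by the first paragraph is precisely the limit in \eqref{20}.

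The hard part is the strong convergence $u^s\to u_t$: the quadratic term $\langle\tfrac12A^1u,u\rangle_{\Omega_t}$ in $\mathcal{E}^1$ is only weakly lower semicontinuous, so in the lower estimate weak convergence of $u^s$ would not suffice; strong convergence is recovered only after identifying the weak limit through the transported optimality system and using the sign of the cross terms $-(\lambda_t,B\bar u)_{\Omega_t}-(\bar\lambda,Bu_t)_{\Omega_t}\le0$ coming from primal--dual admissibility. A secondary technical obstacle is the uniform bound for $\lambda^s$, which forces an upgrade of the bare surjectivity in \eqref{19f} to a perturbation-stable bounded right inverse of $B+sB^1+B^2_s$.
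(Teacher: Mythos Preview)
Your proof is correct and follows essentially the same route as the paper: transport the perturbed saddle-point problem to $\Omega_t$ via \eqref{19a}--\eqref{19c}, establish uniform bounds and convergence of $(u^s,\lambda^s)$, and then squeeze the difference of saddle values between $\mathcal{L}_s(u^s,\lambda_t)-\mathcal{L}(u^s,\lambda_t)$ and $\mathcal{L}_s(u_t,\lambda^s)-\mathcal{L}(u_t,\lambda^s)$ exactly as in the paper's \eqref{29}. The only noteworthy variations are that you identify the weak limit $(\bar u,\bar\lambda)$ by passing to the limit in the transported \emph{optimality conditions} and then obtain strong convergence of $u^s$ from the sign of the cross terms $-(\lambda_t,B\bar u)-(\bar\lambda,Bu_t)\le0$, whereas the paper passes to the limit directly in the saddle-point inequalities \eqref{22'} and deduces strong convergence by a Lagrangian-based rearrangement; and you stop at weak convergence of $\lambda^s$ (which is indeed all that the squeeze needs), while the paper upgrades it to strong convergence in \eqref{28c} without actually using it.
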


\begin{proof}
We apply to \eqref{17} the asymptotic formula \eqref{19d}, \eqref{19f}, 
\eqref{19h} and use the assumptions \eqref{19a}--\eqref{19c} to get 
the transformed solution pair $(u_{t+s}\circ\phi_s, \lambda_{t+s}\circ\phi_s)\in 
V(\Omega_{t})\times K^\star(\Omega_{t})$ which solves the minimax problem  
\begin{equation}\label{22}
\mathcal{L}_s(u_{t+s}\circ\phi_s,\lambda_{t+s}\circ\phi_s;\Omega_{t})\\ 
=\min_{w\in V(\Omega_{t})} \max_{p\in K^\star(\Omega_{t})} 
\mathcal{L}_s(w, p;\Omega_{t})
\end{equation}
implying a saddle point (see \eqref{9'}):   
\begin{multline}\label{22'}
\tag{2.23'}
\mathcal{L}_s(u_{t+s}\circ\phi_s, p;\Omega_{t}) 
\le\mathcal{L}_s(u_{t+s}\circ\phi_s,\lambda_{t+s}\circ\phi_s;\Omega_{t})\\ 
\le\mathcal{L}_s(w, \lambda_{t+s}\circ\phi_s;\Omega_{t})\quad 
\forall (w, p)\in V(\Omega_{t})\times K^\star(\Omega_{t}).
\end{multline}
The {\em transformed Lagrangian} 
$\mathcal{L}_s: V(\Omega_{t})\times H(\Omega_{t})\mapsto \mathbb{R}$
is defined via 
\begin{subequations}\label{23}
\begin{equation}\label{230}
\mathcal{L}_s(v\circ\phi_s,\mu\circ\phi_s; \Omega_{t})
=\mathcal{L}(v,\mu; \Omega_{t+s})\quad (\text{with }\mathcal{L}_0 =\mathcal{L})
\end{equation} 
for all $(v,\mu)\in V(\Omega_{t+s})\times K^\star(\Omega_{t+s})$, 
and yields the expansion
\begin{equation}\label{23a}
 \mathcal{L}_s(u,\lambda;\Omega_{t})\\ 
:=\mathcal{L}(u,\lambda;\Omega_{t})
+s \mathcal{L}^1(u,\lambda;\Omega_{t}) +\mathcal{L}^2_s(u,\lambda;\Omega_{t})
\end{equation}
where the first asymptotic terms $\mathcal{L}^1(u,\lambda;\Omega_{t})$ 
is given in \eqref{21a}, and the residual  
\begin{equation}\label{2b}
\mathcal{L}^2_s(u,\lambda;\Omega_{t}) 
:=\langle {\textstyle\frac{1}{2}} A^2_s u -f^2_s, u\rangle_{\Omega_{t}} 
-(\lambda, B^2_s u )_{\Omega_{t}}. 
\end{equation}
\end{subequations}

Based on Theorem~\ref{theo1}, optimality conditions for \eqref{22} are 
\begin{subequations}\label{24}
\begin{multline}\label{24a}
\langle [A +s A^1 +A^2_s] (u_{t+s}\circ\phi_s) 
-(f +s f^1 +f^2_s), w \rangle_{\Omega_{t}}\\
 -(\lambda_{t+s}\circ\phi_s, [B +s B^1 +B^2_s] w)_{\Omega_{t}} =0\quad
\forall w\in V(\Omega_{t}) 
\end{multline}
\begin{multline}\label{24b}
\bigl( p -\lambda_{t+s}\circ\phi_s, 
[B +s B^1 +B^2_s] (u_{t+s}\circ\phi_s) \bigr)_{\Omega_{t}} \ge0\\
\forall p\in K^\star(\Omega_{t}). 
\end{multline}
\end{subequations}
Taking the test function $w =u_{t+s}\circ\phi_s$ in \eqref{24a}, 
using the complementarity 
\begin{equation}\label{25}
(\lambda_{t+s}\circ\phi_s, [B +s B^1 +B^2_s] u_{t+s}\circ\phi_s)_{\Omega_{t}} =0
\end{equation}
which follows from \eqref{24b}, the strong monotony \eqref{1} of $A$, 
and the residual estimates \eqref{19e}, \eqref{19g}, \eqref{19i}, 
for $|s| \in(0,s_0)$ with sufficiently small $s_0>0$ 
and $t\in(t_0,t_1)$ we get the uniform estimate: 
\begin{subequations}\label{26}
\begin{equation}\label{26a}
\| u_{t+s}\circ\phi_s\|_{V(\Omega_{t})} \le{\rm const}.
\end{equation}
Similarly, from \eqref{24a} we derive the uniform estimate in the dual space: 
\begin{equation}\label{26b}
\| \lambda_{t+s}\circ\phi_s\|_{H^\star(\Omega_{t})} \le{\rm const}
\end{equation}
\end{subequations}
for $|s|\in(0,s_1)$ with sufficiently small $0<s_1\le s_0$ and $t\in(t_0,t_1)$. 

From \eqref{26} it follows the existence of $(\overline{u}, \overline{\lambda})\in 
V(\Omega_{t})\times H^\star(\Omega_{t})$ 
and a subsequence denoted by $s_k$ such that as $s_k\to0$:
\begin{subequations}\label{27}
\begin{equation}\label{27a}
u_{t+s_k}\circ\phi_{s_k} \rightharpoonup \overline{u}
\quad\text{weakly in $V(\Omega_{t})$} 
\end{equation}
\begin{equation}\label{27b}
\lambda_{t+s_k}\circ\phi_{s_k} \rightharpoonup \overline{\lambda}
\quad\text{$\star$-weakly in $H^\star(\Omega_{t})$}. 
\end{equation}
Every linear and continuous operator $B$ is weak-to-weak continuous 
(see \cite[Theorem~3.10]{Bre/10}), therefore  
\begin{equation}\label{27c}
B(u_{t+s_k}\circ\phi_{s_k}) \rightharpoonup B\overline{u}
\quad\text{weakly in $H(\Omega_{t})$}. 
\end{equation}
\end{subequations}
In accordance with \eqref{19c} the inclusion 
$\lambda_{t+s}\circ\phi_s\in K^\star(\Omega_{t})$ holds, 
the convex closed set $K^\star(\Omega_{t})$ is $\star$-weakly closed, 
hence $\overline{\lambda}\in K^\star(\Omega_{t})$. 
Since a quadratic form is weakly lower semi-continuous, 
we pass to the limit in \eqref{22'}
using the weak convergences in \eqref{27} and get
\begin{multline*}
\mathcal{L}(\overline{u}, p;\Omega_{t})
\le \liminf_{s_k\to0} \mathcal{L}_{s_k}(u_{t+s_k}\circ\phi_{s_k}, p;\Omega_{t})
\le \mathcal{L}(\overline{u},\overline{\lambda};\Omega_{t})\\
\le \limsup_{s_k\to0} \mathcal{L}_{s_k}(w,\lambda_{t+s_k}\circ\phi_{s_k};\Omega_{t}) 
\le\mathcal{L}(w,\overline{\lambda};\Omega_{t}) 
\end{multline*}
for arbitrary $(w,p)\in V(\Omega_{t})\times K^\star(\Omega_{t})$. 
Therefore, $(\overline{u}, \overline{\lambda}) =(u_{t},\lambda_{t})$ is 
a saddle point satisfying \eqref{9'}, thus solves \eqref{9}. 

In order to estimate the solution difference in the norm, 
we start with the inequality \eqref{1} and rearrange the terms such that  
\begin{multline*}
{\textstyle\frac{\underline{c}_A}{2}} 
\|u_{t+s}\circ\phi_s -u_{t} \|^2_{V(\Omega_{t})} 
\le {\textstyle\frac{1}{2}} \langle A \bigl( u_{t+s}\circ\phi_s -u_{t} \bigr), 
u_{t+s}\circ\phi_s -u_{t} \rangle_{\Omega_{t}}\\ 
=-\langle A \bigl( u_{t+s}\circ\phi_s -u_{t} \bigr), u_{t} \rangle_{\Omega_{t}} 
-{\textstyle\frac{1}{2}} \langle A u_{t}, u_{t} \rangle_{\Omega_{t}} 
+{\textstyle\frac{1}{2}} \langle A \bigl( u_{t+s}\circ\phi_s \bigr), 
u_{t+s}\circ\phi_s \rangle_{\Omega_{t}}\\
=-\langle A \bigl( u_{t+s}\circ\phi_s -u_{t} \bigr), u_{t} \rangle_{\Omega_{t}} 
+\langle f, u_{t+s}\circ\phi_s -u_{t} \rangle_{\Omega_{t}}\\
+\mathcal{L}(u_{t+s}\circ\phi_s,\lambda_{t+s}\circ\phi_s;\Omega_{t}) 
-\mathcal{L}(u_{t},\lambda_{t};\Omega_{t})
+(\lambda_{t+s}\circ\phi_s, [s B^1 +B^2_s] u_{t+s}\circ\phi_s)_{\Omega_{t}}
\end{multline*}
due to the orthogonality relations $(\lambda_{t}, B u_{t} )_{\Omega_{t}} =0$ and \eqref{25}. 
Using further 
\begin{multline*}
\limsup_{s_k\to0} \bigl\{ \mathcal{L}(u_{t+s_k}\circ\phi_{s_k}, 
\lambda_{t+s_k}\circ\phi_{s_k};\Omega_{t}) 
-\mathcal{L}(u_{t},\lambda_{t};\Omega_{t})\bigr\}\\ 
=\limsup_{s_k\to0} \bigl\{ \mathcal{L}_{s_k}(u_{t+s_k}\circ\phi_{s_k}, 
\lambda_{t+s_k}\circ\phi_{s_k};\Omega_{t}) 
-\mathcal{L}_{s_k}(u_{t},\lambda_{t+s_k}\circ\phi_{s_k};\Omega_{t}) \bigr\} \le0
\end{multline*}
because of \eqref{22'} with $w =u_{t}$ and \eqref{27}, we conclude that 
\begin{subequations}\label{28}
\begin{equation}\label{28a}
{\textstyle\frac{\underline{c}_A}{2}} \limsup_{s_k\to0} 
\|u_{t+s_k}\circ\phi_{s_k} -u_{t} \|^2_{V(\Omega_{t})}\le 0.
\end{equation}
Therefore, from \eqref{3} it follows that as $s_k\to0$ 
\begin{equation}\label{28b}
\| B(u_{t+s_k}\circ\phi_{s_k} -u_{t}) \|_{H(\Omega_{t})} \to0.
\end{equation}
From \eqref{11a} and \eqref{24a} we arrive at 
\begin{equation*}
(\lambda_{t+s}\circ\phi_s -\lambda_{t}, B w )_{\Omega_{t}}
=\langle A (u_{t+s}\circ\phi_s -u_{t}), w \rangle_{\Omega_{t}} +{\rm O}(s)
\end{equation*}
for all $w\in V(\Omega_{t})$, 
henceforth the surjection of $B$ provides that 
\begin{equation}\label{28c}
\|\lambda_{t+s_k}\circ\phi_{s_k} -\lambda_{t} \|_{H^\star(\Omega_{t})} \to0.
\end{equation}
\end{subequations}
The relations \eqref{28} imply the strong convergences in \eqref{27}. 

Based on the asymptotic formula \eqref{23} 
we find the lower bound: 
\begin{subequations}\label{29}
\begin{multline}\label{29a}
\mathcal{L}_s(u_{t+s}\circ\phi_s,\lambda_{t+s}\circ\phi_s;\Omega_{t}) 
-\mathcal{L}(u_{t},\lambda_{t};\Omega_{t})\\
\ge\mathcal{L}_s(u_{t+s}\circ\phi_s,\lambda_{t};\Omega_{t}) 
-\mathcal{L}(u_{t+s}\circ\phi_s,\lambda_{t};\Omega_{t})\\
=s \mathcal{L}^1(u_{t+s}\circ\phi_s,\lambda_{t};\Omega_{t}) 
+\mathcal{L}^2_s(u_{t+s}\circ\phi_s,\lambda_{t};\Omega_{t})
\end{multline}
using the maximum in \eqref{22'} with the test function $p =\lambda_{t}$, and  
the minimum in \eqref{9'} with the test function $w =u_{t+s}\circ\phi_s$.  
Similarly, we calculate the upper bound: 
\begin{multline}\label{29b}
\mathcal{L}_s(u_{t+s}\circ\phi_s,\lambda_{t+s}\circ\phi_s;\Omega_{t}) 
-\mathcal{L}(u_{t},\lambda_{t};\Omega_{t})\\
\le\mathcal{L}_s(u_{t},\lambda_{t+s}\circ\phi_s;\Omega_{t}) 
-\mathcal{L}(u_{t}, 
\lambda_{t+s}\circ\phi_s;\Omega_{t})\\
=s \mathcal{L}^1(u_{t},\lambda_{t+s}\circ\phi_s;\Omega_{t}) 
+\mathcal{L}^2_s(u_{t},\lambda_{t+s}\circ\phi_s;\Omega_{t}) 
\end{multline}
\end{subequations}
utilizing the minimum in \eqref{22'} with the test function $w =u_{t}$, 
and the maximum in \eqref{9'} with the test function $p =\lambda_{t+s}\circ\phi_s$.  
The strong convergences \eqref{28} provide the asymptotic order of the residuals: 
\begin{equation*}
\mathcal{L}^2_{s_k}(u_{t},\lambda_{t+s_k}\circ\phi_{s_k};\Omega_{t}) 
={\rm o}(s_k),\quad \mathcal{L}^2_{s_k}(u_{t+s_k}\circ\phi_{s_k}, 
\lambda_{t};\Omega_{t}) ={\rm o}(s_k)
\end{equation*}
hence from \eqref{29} divided with $s$ it follows existence of the limit 
\begin{equation}\label{30}
\lim_{s_k\to0} {\textstyle\frac{\mathcal{L}(u_{t+s_k}, 
\lambda_{t+s_k};\Omega_{t+s_k}) 
-\mathcal{L}(u_{t},\lambda_{t};\Omega_{t})}{s_k}} 
=\mathcal{L}^1(u_{t},\lambda_{t};\Omega_{t})
\end{equation}
because of the identity $\mathcal{L}(u_{t+s},\lambda_{t+s};\Omega_{t+s}) =\mathcal{L}_s(u_{t+s}\circ\phi_s,\lambda_{t+s}\circ\phi_s;\Omega_{t})$ 
due to \eqref{230}. 
The optimal value Lagrangian and objective functions are equal, 
see \eqref{ovf} and the similar identity  
$\mathcal{L}(u_{t+s},\lambda_{t+s};\Omega_{t+s}) 
=\mathcal{E}(u_{t+s};\Omega_{t+s})$, 
then \eqref{30} coincides with formula \eqref{20} of the shape derivative 
and completes the proof. 
\end{proof}

\begin{remark}\label{rem1}
Theorem~\ref{theo2} presents a direct proof of the shape differentiability. 
Since the bijection \eqref{19a}--\eqref{19c} holds, then the Correa--Seeger 
theorem  on directional differentiability can be applied by checking 
hypotheses (H1)--(H4) in \cite[Chapter~10, Theorem~5.1]{DZ/11}. 
\end{remark}

To formulate the hypotheses, let us define the optimal values 
\begin{equation*}
l_t :=\sup_{p\in K^\star(\Omega_{t})}  \inf_{w\in V(\Omega_{t})} \mathcal{L}(w, p;\Omega_{t}) \le \inf_{w\in V(\Omega_{t})} 
\sup_{p\in K^\star(\Omega_{t})}  \mathcal{L}(w, p;\Omega_{t})=: l^t,
\end{equation*}
and the solution sets 
\begin{multline*}
V_t =\{u\in V(\Omega_{t})\vert\; \sup_{p\in K^\star(\Omega_{t})} 
\mathcal{L}(u, p;\Omega_{t}) = l^t\},\\ 
K^\star_t =\{\lambda\in K^\star(\Omega_{t})\vert\; \inf_{w\in V(\Omega_{t})} 
\mathcal{L}(w, \lambda;\Omega_{t}) = l_t\}\quad\text{for }t\in(t_0,t_1). 
\end{multline*}

(H1) The solution sets are nonempty due to Theorem~\ref{theo1}. 
Moreover, $l_t =l^t$ and $V_t =\{u_{t}\}$, $K^\star_t =\{\lambda_{t}\}$ 
are singleton.
 
(H2) For $t\in(t_0,t_1)$ there exists the partial derivative:
\begin{subequations}\label{CS}
\begin{multline}\label{CSa}
\lim_{s\to0} {\textstyle\frac{\mathcal{L}_s 
(u,\lambda; \Omega_{t}) -\mathcal{L}(u,\lambda;\Omega_{t})}{s}} 
=\mathcal{L}^1(u,\lambda;\Omega_{t})\\
\forall (u,\lambda)\in \bigl(\cup_{\tau\in(t_0,t_1)} V_\tau\times K^\star_t\bigr) 
\cup \bigl(V_t\times \cup_{\tau\in(t_0,t_1)} K^\star_\tau\bigr)
\end{multline} 
within the asymptotic expansion \eqref{23a} 
which is uniform with respect to $(u,\lambda)$. 
This hypothesis holds due to assumptions \eqref{19d}--\eqref{19i}. 

(H3) There exist an accumulation point $\overline{u}\in V_t$ and 
a subsequence $u_{t+s_k}\circ\phi_{s_k}\in V_t$ denoted by $s_k$ such that 
\begin{equation}\label{CSb}
\|u_{t+s_k}\circ\phi_{s_k} -\overline{u}\|_{V(\Omega_{t})}\to0 
\quad\text{as }s_k\to0,
\end{equation}
which is proved in \eqref{27a} with $\overline{u} =u_{t}$, and 
\begin{equation}\label{CSc}
\liminf_{s_k\to0} \mathcal{L}^1 (u_{t+s_k}\circ\phi_{s_k},p; \Omega_{t})
\ge \mathcal{L}^1 (\overline{u},p; \Omega_{t})
\quad\forall p\in K^\star_t, 
\end{equation}
that holds due to continuity in the strong topology 
of the bilinear mapping $w\mapsto \mathcal{L}^1 (w,p; \Omega_{t})$. 

(H4) There exist an accumulation point $\overline{\lambda}\in K^\star_t$ 
and a subsequence $\lambda_{t+s_k}\circ\phi_{s_k}\in K^\star_t$ 
denoted by $s_k$ such that 
\begin{equation}\label{CSd}
\|\lambda_{t+s_k}\circ\phi_{s_k} -\overline{\lambda}\|_{H^\star(\Omega_{t})}\to0 
\quad\text{as }s_k\to0,
\end{equation}
with $\overline{\lambda} =\lambda_{t}$ according to \eqref{27c}, and 
\begin{equation}\label{CSe}
\limsup_{s_k\to0} \mathcal{L}^1 (w,\lambda_{t+s_k}\circ\phi_{s_k}; \Omega_{t})
\le \mathcal{L}^1 (w,\overline{\lambda}; \Omega_{t})
\quad\forall w\in V_t, 
\end{equation}
\end{subequations}
provided by the weak continuity of the linear mapping 
$p\mapsto \mathcal{L}^1 (w,p; \Omega_{t})$. 

Indeed, testing \eqref{22'} with $(w,p) =(u_{t},\lambda_{t})$ and 
\eqref{9'} with $(w,p) =(u_{t+s}\circ\phi_s,\lambda_{t+s}\circ\phi_s)$ gives 
\begin{multline*}
{\textstyle\frac{\mathcal{L}_s(u_{t+s}\circ\phi_s,\lambda_{t};\Omega_{t}) 
-\mathcal{L}(u_{t+s}\circ\phi_s,\lambda_{t};\Omega_{t})}{s}}
\le{\textstyle\frac{\mathcal{L}_s(u_{t+s}\circ\phi_s,\lambda_{t+s}\circ\phi_s;\Omega_{t}) 
-\mathcal{L}(u_{t},\lambda_{t};\Omega_{t})}{s}} =:\Delta(s)\\
\le {\textstyle\frac{\mathcal{L}_s(u_{t},\lambda_{t+s}\circ\phi_s;\Omega_{t}) 
-\mathcal{L}(u_{t},\lambda_{t+s}\circ\phi_s;\Omega_{t})}{s}}.
\end{multline*}
Since we show that the expansion \eqref{23a} holds, we get 
\begin{multline*}
\mathcal{L}^1 (u_{t+s}\circ\phi_s,\lambda_{t}; \Omega_{t})
+{\textstyle\frac{1}{s}} \mathcal{L}^2_s (u_{t+s}\circ\phi_s,\lambda_{t}; \Omega_{t})
\le\Delta(s)\\
\le \mathcal{L}^1 (u_{t},\lambda_{t+s}\circ\phi_s; \Omega_{t})
+{\textstyle\frac{1}{s}} \mathcal{L}^2_s (u_{t},\lambda_{t+s}\circ\phi_s; \Omega_{t})
\end{multline*}
and use \eqref{CSc} and \eqref{CSe} to pass it to the limit as $s_k\to0$, 
which is essentially the idea of the theorem of Correa--Seeger. 

\begin{remark}\label{rem2}
The assumptions \eqref{19d}--\eqref{19i} on the asymptotic expansion 
can be relaxed in Theorem~\ref{theo2} to the abstract conditions \eqref{CS}. 
\end{remark}

We note the important special cases in two corollaries. 
The first corollary relates the assumption \eqref{19c} of the dual cones 
to the primal cones, see \cite[Theorem~3.4]{HK/11}. 

\begin{corollary}\label{cor1}
If the primal cone \eqref{4} is such that $K^\star(\Omega_{t}) =K(\Omega_{t})$, 
then the assumption \eqref{19c} is equivalent to bijection of the primal cones 
\begin{equation}\label{31}
K(\Omega_{t})\mapsto K(\Omega_{t+s}) 
\end{equation} 
and formula of the shape derivative \eqref{20} implies the equality 
\begin{equation*}
{\textstyle\frac{d}{dt}} \mathcal{E}(u_{t};\Omega_{t}) 
=\mathcal{E}^1(u_{t};\Omega_{t}),\quad 
(\lambda_{t}, B^1 u_{t} )_{\Omega_{t}} =0
\end{equation*}
under the assumptions \eqref{19} used in Theorem~\ref{theo2}. 
\end{corollary}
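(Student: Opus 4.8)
The plan is to verify the three assertions of Corollary~\ref{cor1} in turn: (a) that $K^\star(\Omega_t)=K(\Omega_t)$ forces the bijection hypothesis \eqref{19c} to coincide with the bijection \eqref{31} of primal cones; (b) that then the shape derivative $\mathcal{L}^1(u_t,\lambda_t;\Omega_t)$ reduces to $\mathcal{E}^1(u_t;\Omega_t)$; and (c) that the dual contribution vanishes, i.e. $(\lambda_t,B^1u_t)_{\Omega_t}=0$.

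First I would treat (a). Observe that the map $[\mu\mapsto\mu\circ\phi_s]$ sends $H^\star(\Omega_{t+s})$ bijectively onto $H^\star(\Omega_t)$ by \eqref{19b}, and under the identification $H^\star=H$ (implicit when one writes $K^\star=K\subset V$, using that $K$ is defined as a subset of $V$ via $Bu\ge0$) it is the same map as $[v\mapsto v\circ\phi_s]: V(\Omega_{t+s})\to V(\Omega_t)$ from \eqref{19a}. Hence asking that this map restrict to a bijection $K^\star(\Omega_{t+s})\to K^\star(\Omega_t)$ is, under the hypothesis $K^\star=K$, literally the statement that $[v\mapsto v\circ\phi_s]$ restricts to a bijection $K(\Omega_{t+s})\to K(\Omega_t)$; inverting via $\phi_s^{-1}$ gives the forward bijection \eqref{31}. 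I expect this identification to be the main subtlety: one must be careful that the Riesz-type identification making $K^\star$ a subset of the same space as $K$ is compatible with the composition operators, so that "$\mu\circ\phi_s$'' and "$v\circ\phi_s$'' really denote the same operation on the same objects. Once that is granted the equivalence is immediate.

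Next, for (b) and (c), the idea is to exploit that when $K^\star=K$ one may use the \emph{primal} cone itself as a set of admissible test directions in the dual optimality condition. Concretely, $u_t\in K(\Omega_t)=K^\star(\Omega_t)$, so taking $p=u_t$ in \eqref{11b} together with $p=0$ and the complementarity $(\lambda_t,Bu_t)_{\Omega_t}=0$ established in the proof of Theorem~\ref{theo1} pins down the pairing structure; more usefully, I would return to the chain \eqref{29a}--\eqref{29b} in the proof of Theorem~\ref{theo2}, which bounds $\Delta(s)$ between $\mathcal{L}^1(u_{t+s}\circ\phi_s,\lambda_t;\Omega_t)+\mathrm{o}(1)$ and $\mathcal{L}^1(u_t,\lambda_{t+s}\circ\phi_s;\Omega_t)+\mathrm{o}(1)$. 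When \eqref{31} holds, the argument of \cite[Theorem~3.4]{HK/11} applies: the transported primal solution $u_{t+s}\circ\phi_s$ is itself feasible for the primal problem at time $t$, so one can test the primal variational inequality \eqref{7} with $w=u_{t+s}\circ\phi_s$ and conversely test the perturbed inequality with $u_t\circ\phi_s^{-1}$ pulled back, obtaining two-sided estimates purely in terms of $\mathcal{E}$ and $\mathcal{E}^1$. Passing to the limit forces $\mathcal{L}^1(u_t,\lambda_t;\Omega_t)=\mathcal{E}^1(u_t;\Omega_t)$.

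Finally, (c) then drops out by comparison: from \eqref{21a} we have $\mathcal{L}^1(u_t,\lambda_t;\Omega_t)=\mathcal{E}^1(u_t;\Omega_t)-(\lambda_t,B^1u_t)_{\Omega_t}$, and combined with $\mathcal{L}^1(u_t,\lambda_t;\Omega_t)=\mathcal{E}^1(u_t;\Omega_t)$ from step (b) this yields $(\lambda_t,B^1u_t)_{\Omega_t}=0$ at once. Alternatively, and perhaps more transparently, one differentiates the complementarity identity: from \eqref{25}, $(\lambda_{t+s}\circ\phi_s,[B+sB^1+B^2_s](u_{t+s}\circ\phi_s))_{\Omega_t}=0$ for all $s$, and expanding using the strong convergences \eqref{28} of $u_{t+s}\circ\phi_s\to u_t$, $\lambda_{t+s}\circ\phi_s\to\lambda_t$ together with $(\lambda_t,Bu_t)_{\Omega_t}=0$ isolates the first-order term $s\,[(\lambda_t,B^1u_t)_{\Omega_t}+(\,\cdot\,,B(u'))+\dots]$; the bijection \eqref{31} of primal cones is exactly what makes the "derivative of the constraint along the flow'' term $(\lambda_t,B(\text{material derivative of }u_t))_{\Omega_t}$ vanish, leaving $(\lambda_t,B^1u_t)_{\Omega_t}=0$. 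I expect the cleanest write-up to use the first route (pure algebra from \eqref{21a} plus step (b)); the second route is a useful consistency check but requires handling the material derivative of $u_t$, which is more than is needed here. The only genuine obstacle, as noted, is the identification in step (a); steps (b) and (c) are then routine given Theorem~\ref{theo2} and \cite[Theorem~3.4]{HK/11}.
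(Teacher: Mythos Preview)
The paper offers no explicit proof for this corollary; it is stated immediately after the sentence ``The first corollary relates the assumption \eqref{19c} of the dual cones to the primal cones, see \cite[Theorem~3.4]{HK/11}'' and left at that. Your proposal therefore matches the paper's intent exactly: part (a) is the tautology that \eqref{19c} and \eqref{31} coincide once $K^\star=K$; part (b) is precisely the appeal to \cite[Theorem~3.4]{HK/11}, which under the primal-cone bijection gives $\tfrac{d}{dt}\mathcal{E}(u_t;\Omega_t)=\mathcal{E}^1(u_t;\Omega_t)$ directly from two-sided testing of the primal variational inequality; and part (c) is the one-line comparison of that formula with \eqref{20}--\eqref{21a}. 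Your ``first route'' for (c) is the right one and is all the paper implicitly has in mind; the alternative via differentiating \eqref{25} is unnecessary here.

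The only point worth flagging is your own caveat in (a): the hypothesis $K^\star(\Omega_t)=K(\Omega_t)$ equates a subset of $H^\star(\Omega_t)$ with a subset of $V(\Omega_t)$, so it presupposes an identification of these spaces (or at least of the relevant cones) that commutes with $\phi_s$-pullback. The paper does not comment on this, treating it as a formal hypothesis; you are right that in any concrete instance one must check compatibility, but for the purposes of the corollary no further argument is expected.
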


The second corollary extends the result to equality constraints. 

\begin{corollary}\label{cor2}
The inequality constraint in \eqref{4} can be replaced 
with the equality constraint resulting in the following primal and dual cones 
\begin{equation}\label{33}
K(\Omega_{t}) =\{u\in V(\Omega_{t}) \vert \; Bu =0\},
\quad K^\star(\Omega_{t}) =H^\star(\Omega_{t}). 
\end{equation}
Then the assumption \eqref{19c} is satisfied within \eqref{19b}, 
thus Theorem~\ref{theo2} holds true under the made assumptions. 
\end{corollary}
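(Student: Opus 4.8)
\emph{Proof sketch.} The plan is to recognize the equality-constrained problem as a degenerate instance of the setting of Section~\ref{sec2.1} in which \eqref{19c} carries no information, and then to invoke Theorem~\ref{theo2}. First I would note that $K(\Omega_t)=\{u\in V(\Omega_t)\vert\ Bu=0\}$ is the kernel of $B$, hence a closed linear subspace by the continuity \eqref{3}; being convex and closed, it leaves the primal problem \eqref{6} uniquely solvable through the coercivity and boundedness of $\mathcal{E}$. Next, since $Bu=0$ for every $u\in K(\Omega_t)$, the inequality $(\lambda,Bu)_{\Omega_t}\ge0$ in \eqref{8} holds with equality for \emph{every} $\lambda\in H^\star(\Omega_t)$, so $K^\star(\Omega_t)=H^\star(\Omega_t)$, which is precisely \eqref{33}. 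I would then check that the conclusions of Theorem~\ref{theo1} survive in this classical mixed formulation: the only modification is that the inclusion $u_t\in K(\Omega_t)$, previously deduced from the bipolar theorem, now follows by testing \eqref{11b} with $p=\lambda_t\pm\mu$ for arbitrary $\mu\in H^\star(\Omega_t)$, giving $(\mu,Bu_t)_{\Omega_t}=0$ and hence $Bu_t=0$; the LBB condition \eqref{12} still renders $\lambda_t$ unique, and the equality \eqref{ovf} persists because $\sup_{p\in H^\star(\Omega_t)}\{-(p,Bw)_{\Omega_t}\}$ equals $0$ if $Bw=0$ and $+\infty$ otherwise.

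Having fixed the cones, the second step is immediate: with $K^\star(\Omega_\tau)=H^\star(\Omega_\tau)$ at $\tau=t$ and $\tau=t+s$, the map $[\mu\mapsto\mu\circ\phi_s]$ appearing in \eqref{19c} is nothing but the bijection $H^\star(\Omega_{t+s})\mapsto H^\star(\Omega_t)$ already assumed in \eqref{19b}. Thus \eqref{19c} is satisfied automatically, while the remaining hypotheses of Theorem~\ref{theo2} --- the bijections \eqref{19a}--\eqref{19b}, the expansions \eqref{19d}, \eqref{19f}, \eqref{19h} with the residual bounds \eqref{19e}, \eqref{19g}, \eqref{19i}, and surjectivity of $B+sB^1+B^2_s$ --- are retained verbatim. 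Applying Theorem~\ref{theo2} yields the shape differentiability \eqref{20} with the shape derivative $\mathcal{L}^1(u_t,\lambda_t;\Omega_t)$ given in \eqref{21}, which is the claim.

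I do not expect a real obstacle; the only care needed is to confirm that neither proof of Theorems~\ref{theo1} and \ref{theo2} secretly exploited the primal cone being pointed rather than a full subspace. The single place that did --- the bipolar-theorem step for $u_t\in K(\Omega_t)$ --- is replaced as above, and everything else in the proof of Theorem~\ref{theo2} goes through unchanged in the equality case: $K^\star(\Omega_t)=H^\star(\Omega_t)$ is convex and $\star$-weakly closed, so the accumulation point $\overline\lambda$ of \eqref{27b} lies in $K^\star(\Omega_t)$ for free; and the complementarity relations $(\lambda_t,Bu_t)_{\Omega_t}=0$ and \eqref{25} hold trivially because $Bu_t=0$ and, by \eqref{19f} combined with the $H^\star$-bijectivity in \eqref{19b}, $[B+sB^1+B^2_s](u_{t+s}\circ\phi_s)=0$. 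Hence the argument of Theorem~\ref{theo2} transfers word for word and the corollary follows.
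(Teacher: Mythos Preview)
Your proposal is correct and aligns with the paper's approach: the paper states Corollary~\ref{cor2} without proof, the entire argument being contained in the phrase ``the assumption \eqref{19c} is satisfied within \eqref{19b}'', and you have filled in precisely this reasoning---that $K^\star(\Omega_\tau)=H^\star(\Omega_\tau)$ reduces \eqref{19c} to the second bijection in \eqref{19b}. Your additional due-diligence checks (that $K^\star=H^\star$ follows from \eqref{8} with $Bu=0$, and that the proofs of Theorems~\ref{theo1}--\ref{theo2} carry over when the primal cone is a subspace) are sound and go beyond what the paper spells out, but they do not depart from the intended route.
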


In the next section we realize an application of Corollary~\ref{cor2} 
to the Stokes problem with the divergence-free equality constraint, 
that mapping is not a bijection again. 

\section{Example of shape derivative: Stokes problem}\label{sec3}

Let $\Omega_{t}$ be a domain with Lipschitz continuous boundary, 
denote by $n_{t}$ the outward unit normal vector, 
and let the boundary $\partial\Omega_{t}$ consist of two disjoint sets 
$\Gamma^D_{t}$ and $\Gamma^N_{t}$. 
For a given stationary external force 
$f\in H^1_{\rm loc}(\mathbb{R}^d;\mathbb{R}^d)$, 
we consider the {\em Stokes problem} finding a vector-valued field of flow velocity 
$u_{t} =((u_{t})_1,\dots,(u_{t})_d)$ and a scalar-valued $\lambda_{t}$ 
implying the pressure such that 
\begin{subequations}\label{S}
\begin{equation}\label{Sa}
-\Delta u_{t} +\nabla \lambda_{t} =f\quad\text{in $\Omega_{t}$}
\end{equation}
\begin{equation}\label{Sb}
{\rm div} u_{t} =0\quad\text{in $\Omega_{t}$}
\end{equation}
\begin{equation}\label{Sc}
u_{t} =0 \quad\text{on $\Gamma^D_{t}$}
\end{equation}
\begin{equation}\label{Sd}
{\textstyle\frac{\partial}{\partial n_{t}}} u_{t} -\lambda_{t} n_{t} =0 
\quad\text{on $\Gamma^N_{t}$}.
\end{equation}
\end{subequations}
The mixed boundary conditions imply no-slip \eqref{Sc} 
and a Neumann-type condition \eqref{Sd}. 
For mixed boundary conditions appropriate for the Stokes equation 
see \cite{BMMW/10}, \cite[Chapter~6]{KMR/01}. 

Corresponding to \eqref{S} primal minimization problem reads: 
Find $u_{t} \in V(\Omega_{t})$ such that ${\rm div}u_{t} =0$ and 
\begin{equation}\label{34}
\mathcal{E}(u_{t};\Omega_{t}) =\min_{w\in K(\Omega_{t})} 
\mathcal{E}(w;\Omega_{t}).
\end{equation}
minimizing the objective function of the energy: 
\begin{equation}\label{35}
\mathcal{E}(w;\Omega_{t}) =
\int_{\Omega_{t}} \sum_{i=1}^d ({\textstyle\frac{1}{2}} 
|\nabla w_i|^2 -f_i w_i) \,dx
\end{equation}
over the primal cone determined by the divergence-free constraint:  
\begin{equation}\label{36}
K(\Omega_{t}) =\{w\in V(\Omega_{t})\vert 
\quad {\rm div} w =0\;\text{a.e. $\Omega_{t}$}\}
\end{equation}
in the function space 
\begin{equation}\label{35'}
V(\Omega_{t}) =\{w\in H^1(\Omega_{t};\mathbb{R}^d)\vert 
\quad w =0\;\text{a.e. $\Gamma^D_{t}$}\}. 
\end{equation} 
The operators $A=-\Delta$ and $B ={\rm div}$ constitute 
the respective duality pairings: 
\begin{subequations}\label{37}
\begin{equation}\label{37a}
\langle Au, w \rangle_{\Omega_{t}} =\int_{\Omega_{t}} \sum_{i=1}^d 
(\nabla u_i)^\top \nabla w_i \,dx,\quad u,w\in V(\Omega_{t})
\end{equation}
\begin{equation}\label{37b}
( \lambda, B u )_{\Omega_{t}} =\int_{\Omega_{t}} 
\lambda {\rm div}u \,dx,\quad \lambda\in H(\Omega_{t}),
\end{equation}
\end{subequations}
and the dual cone 
\begin{equation}\label{36'}
K^\star(\Omega_{t}) =\{\lambda\in H^\star(\Omega_{t})\vert \quad 
( \lambda, B u )_{\Omega_{t}} =0\quad \forall u\in K(\Omega_{t})\},
\end{equation}
where $H(\Omega_{t}) =H^\star(\Omega_{t}) 
=L^2(\Omega_{t};\mathbb{R})$.

If the surface measure ${\rm meas} (\Gamma^N_{t})>0$, then
the LBB condition \eqref{12} holds \cite[Theorem~7.2]{KO/88}, 
which means that $B: V(\Omega_{t}) \mapsto H(\Omega_{t})$ 
is surjective and $K^\star(\Omega_{t})=H^\star(\Omega_{t})$. 
So we can apply Corollary \ref{cor2}. 

If ${\rm meas} (\Gamma^N_{t}) =0$, then $B: H^1_0(\Omega_{t}; 
\mathbb{R}^d) \mapsto L^2_0(\Omega_{t};\mathbb{R})$, where 
\begin{equation}\label{L20}
L^2_0(\Omega_{t};\mathbb{R}) =\{
\lambda\in H(\Omega_{t})\vert\quad ( \lambda, 1 )_{\Omega_{t}} =0\}
\end{equation}
and its dual space excludes constants. 
In this case we cannot apply Corollary \ref{cor2}.  
In fact, the bijection in \eqref{19c} between $L^2_0(\Omega_{t};\mathbb{R})$ and 
\begin{equation*}
L^2_0(\Omega_{t+s};\mathbb{R}) =\{
\mu\in H(\Omega_{t+s})\vert\quad ( \mu, 1 )_{\Omega_{t+s}} =0\}
\end{equation*}
fails because $( \mu, 1 )_{\Omega_{t+s}} \not= ( \mu\circ\phi_s, 1 )_{\Omega_{t}}$ 
according to the transformation formula \eqref{19f}. 

The primal-dual formulation of \eqref{34} consists in  
finding the pair $(u_{t},\lambda_{t})\in V(\Omega_{t}) 
\times L^2(\Omega_{t};\mathbb{R})$ which is a saddle-point: 
\begin{equation}\label{38}
\mathcal{L}(u_{t},\lambda_{t};\Omega_{t}) =\min_{w\in V(\Omega_{t})} 
\max_{p\in L^2(\Omega_{t};\mathbb{R})} 
\mathcal{L}(w, p;\Omega_{t})
\end{equation}
of the Lagrangian 
\begin{equation}\label{39}
\mathcal{L}(w, p;\Omega_{t}) =\mathcal{E}(w;\Omega_{t})
-\int_{\Omega_{t}} p {\rm div} w \,dx
\end{equation}
where the dual cone $K^\star(\Omega_{t}) =L^2(\Omega_{t};\mathbb{R})$ 
according to \eqref{36'}.
The optimality conditions \eqref{11} 
for the problems \eqref{35} and \eqref{39} have the form: 
\begin{subequations}\label{40}
\begin{equation}\label{40a}
\int_{\Omega_{t}} \bigl(\sum_{i=1}^d ( \nabla (u_{t})_i^\top 
\nabla w_i -f_i w_i) -\lambda_{t} {\rm div} w \bigr) \,dx =0
\quad\forall w\in V(\Omega_{t}) 
\end{equation}
\begin{equation}\label{40b}
\int_{\Omega_{t}} p\, {\rm div}u_{t} \,dx =0
\quad\forall p\in L^2(\Omega_{t};\mathbb{R}).
\end{equation}
\end{subequations}
The solution pair is unique 
since the LBB condition holds in this case. 

For a stationary {\em kinematic velocity} 
$\Lambda\in W^{1,\infty}_{\rm loc}(\mathbb{R}^d;\mathbb{R}^d)$ 
the unique solutions $[s\mapsto \phi_s], [s\mapsto \phi^{-1}_s]\in 
C^{1}([-T,T];W^{1,\infty}_{\rm loc}(\mathbb{R}^d;\mathbb{R}^d))$ 
of the {\em autonomous ODE systems} with some $T>0$: 
\begin{equation*}
\left\{ \begin{array}{rl}
{\textstyle\frac{d}{ds}}\phi_s =\Lambda(\phi_s)
&\;\text{for $s\not=0$}\\[1ex]
\phi_s=x&\;\text{for $s=0$},
\end{array}
\right.\quad 
\left\{ \begin{array}{rl}
{\textstyle\frac{d}{ds}}\phi^{-1}_s =-\Lambda(\phi^{-1}_s)
&\;\text{for $s\not=0$}\\[1ex]
\phi^{-1}_s =y&\;\text{for $s=0$}
\end{array}
\right.
\end{equation*}
satisfy \eqref{13} and build the diffeomorphism \eqref{15}, 
see \cite[Lemma~2.2]{HK/11}. 
In the non-stationary case, the velocity 
$\Lambda\in C([-T,T];W^{1,\infty}_{\rm loc}(\mathbb{R}^d;\mathbb{R}^d))$ 
is defined by $\Lambda(t+s,y) ={\textstyle\frac{d}{ds}}\phi_s(\phi^{-1}_s(y))$, 
see \cite[Section~2.9]{SZ/92}.
By this, the {\em transformation matrix} 
$\nabla_y\phi^{-1}_s :=
\{ (\phi^{-1}_s)_{i,j} \}_{i,j=1}^d$, where 
$(\phi^{-1}_s)_{i,j} =\frac{\partial (\phi^{-1}_s)_i}{\partial y_j}$,  
and the Jacobian determinant ${\rm det}(\nabla \phi_s)$ 
of the matrix $\nabla\phi_s :=\{ (\phi_s)_{i,j} \}_{i,j=1}^d$,
where $(\phi_s)_{i,j} =\frac{\partial (\phi_s)_i}{\partial x_j}$,   
admit the following asymptotic expansion as $s\to0$: 
\begin{equation}\label{41}
\nabla_y\phi^{-1}_s(\phi_s) =I -s \nabla\Lambda +r^1_s,\quad 
|\nabla \phi_s| =1 +s {\rm div}\Lambda +r^2_s,
\end{equation}
with the uniform estimate of the residuals 
$\|r^1_s\|_{C([-T,T];L^\infty_{\rm loc}(\mathbb{R}^{d\times d}))} ={\rm o}(s)$ 
and $\|r^2_s\|_{C([-T,T];L^\infty_{\rm loc}(\mathbb{R}))} ={\rm o}(s)$, 
where $\nabla\Lambda =\{ \frac{\partial \Lambda_i}{\partial x_j} \}_{i,j=1}^d$, 
and $I$ stands for the $d$-by-$d$-identity matrix. 

We apply the coordinate transformation $y=\phi_s(x)$ 
to the duality pairings in \eqref{37} rewritten over 
the perturbed domain $\Omega_{t+s}$ according to \eqref{14}.  
As the result, using the chain rule $\nabla_y 
=(\nabla_y \phi^{-1}_s (\phi_s) )^\top \nabla_x$ 
and \eqref{41} we derive the following asymptotic expansions 
corresponding to the assumptions \eqref{19d}--\eqref{19i}. 
Indeed, the operator $A$ is expanded as follows 
for $v, \chi\in H^1(\Omega_{t+s};\mathbb{R}^d)$: 
\begin{subequations}\label{42}
\begin{multline}\label{42a}
\langle Av, \chi \rangle_{\Omega_{t+s}} =\int_{\Omega_{t+s}} 
\sum_{i=1}^d (\nabla_y v_i)^\top \nabla_y \chi_i \,dy\\
=\int_{\Omega_{t}} \sum_{i=1}^d (\nabla (v_i\circ\phi_s))^\top 
\nabla_y \phi^{-1}_s (\phi_s)  (\nabla_y \phi^{-1}_s (\phi_s) )^\top
\nabla (\chi_i\circ\phi_s)) \,{\rm det}(\nabla \phi_s) dx\\
=\int_{\Omega_{t}} \sum_{i=1}^d (\nabla (v_i\circ\phi_s))^\top 
\bigl( I +s \{ ({\rm div}\Lambda) I -\nabla  \Lambda 
-(\nabla  \Lambda)^\top \} \bigr) \nabla (\chi_i\circ\phi_s)) \,dx 
+{\rm o}(s)
\end{multline}
implying \eqref{19d} and \eqref{19e} with the first asymptotic term 
\begin{equation}\label{42b}
\langle A^1 u, w \rangle_{\Omega_{t}}
=\int_{\Omega_{t}} \sum_{i=1}^d (\nabla u_i)^\top 
\{ ({\rm div}\Lambda) I -\nabla  \Lambda 
-(\nabla  \Lambda)^\top \} \nabla w_i \,dx.
\end{equation}
Accordingly, for $\mu\in L^2(\Omega_{t+s};\mathbb{R})$ 
the operator $B$ is expanded as
\begin{multline}\label{42c}
( \mu, B v )_{\Omega_{t+s}} 
=\int_{\Omega_{t+s}} \mu {\rm div}_y v \,dy\\
=\int_{\Omega_{t}} (\mu\circ\phi_s) \sum_{i,j=1}^d 
(\phi^{-1}_s)_{j,i}  (v\circ\phi_s)_{i,j} \,{\rm det}(\nabla \phi_s) \,dx
\end{multline}
which implies \eqref{19f} and \eqref{19g} with 
\begin{equation}\label{42d}
( \lambda, B^1 u )_{\Omega_{t}} 
=\int_{\Omega_{t}}  \lambda \{ ({\rm div}\Lambda) ({\rm div}u) 
- \sum_{i,j=1}^d \Lambda_{j,i} u_{i,j} \} \,dx
\end{equation}
for $u,w\in H^1(\Omega_{t};\mathbb{R}^d)$ and 
$\lambda\in L^2(\Omega_{t};\mathbb{R})$. 
And the transformation 
\begin{multline}\label{42e}
\langle f, v \rangle_{\Omega_{t+s}} 
=\int_{\Omega_{t+s}} \sum_{i=1}^d f_i v_i \,dy\\
=\int_{\Omega_{t}} \sum_{i=1}^d (f_i\circ\phi_s) 
(v_i\circ\phi_s) \,{\rm det}(\nabla \phi_s) dx
\end{multline}
due to \eqref{41} and 
$f_i\circ\phi_s =f_i +s \Lambda^\top\nabla f_i +{\rm o}(s)$ 
follows \eqref{19h} and \eqref{19i} with the first asymptotic term 
\begin{equation}\label{42f}
\langle f^1, u \rangle_{\Omega_{t}} =\int_{\Omega_{t}} 
\sum_{i=1}^d \bigl( ({\rm div} \Lambda) f_i 
+\Lambda^\top\nabla f_i \bigr) u_i \,dx.
\end{equation}
\end{subequations}
The decompositions \eqref{42} agree the assumptions \eqref{19a} and \eqref{19b}. 

The assumption of bijection \eqref{31} is not true for the primal 
cone \eqref{36} because of the transformation of the divergence 
(see formula \eqref{42c}). 
Nevertheless,  the bijection of the dual cone allows us to apply 
Theorem~\ref{theo2} in the form of Corollary~\ref{cor2}. 
The shape differentiability of the Stokes problem based on \eqref{42} 
and using ${\rm div} u_{t} =0$ is established in the next theorem. 

\begin{theorem}\label{theo4}
The Stokes problem given in \eqref{34}--\eqref{36} 
has the {\em shape derivative} 
${\textstyle\frac{d}{dt}} \mathcal{E}(u_{t};\Omega_{t}) 
=\mathcal{L}^1(u_{t},\lambda_{t};\Omega_{t})$ 
which is defined in \eqref{20} and calculated 
according to formula \eqref{21} as follows
\begin{subequations}\label{43}
\begin{equation}\label{43a}
\mathcal{L}^1(u_{t},\lambda_{t};\Omega_{t}) =
\mathcal{E}^1(u_{t};\Omega_{t}) 
+\int_{\Omega_{t}}  \lambda_{t} \sum_{i,j=1}^d 
\Lambda_{j,i} (u_{t})_{i,j} \,dx
\end{equation}
\begin{multline}\label{43b}
\mathcal{E}^1(u_{t};\Omega_{t}) =\int_{\Omega_{t}} 
\sum_{i=1}^d \Bigl( {\textstyle\frac{1}{2}} 
({\rm div} \Lambda) |\nabla (u_{t})_i|^2 
-\sum_{k,j =1}^d (u_{t})_{i,k} \Lambda_{k,j} (u_{t})_{i,j}\\ 
-\bigl( ({\rm div} \Lambda) f_i 
+\Lambda^\top\nabla f_i \bigr) (u_{t})_i \Bigr) \,dx.
\end{multline}
\end{subequations}
\end{theorem}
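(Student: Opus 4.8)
The plan is to specialize Theorem~\ref{theo2}, in the form of Corollary~\ref{cor2}, to the concrete Stokes setting \eqref{34}--\eqref{39} with $A=-\Delta$ and $B={\rm div}$. Since the primal cone \eqref{36} is defined by the \emph{equality} constraint ${\rm div}\,w=0$, the associated dual cone is the full Hilbert space, $K^\star(\Omega_{t})=L^2(\Omega_{t};\mathbb{R})=H^\star(\Omega_{t})$, as recorded in \eqref{36'}; hence Corollary~\ref{cor2} applies and the bijection \eqref{19c} of dual cones reduces to the bijection \eqref{19b} of the spaces $H(\Omega_{t})=H^\star(\Omega_{t})=L^2(\Omega_{t};\mathbb{R})$. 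First I would record that the hypotheses of Theorems~\ref{theo1} and~\ref{theo2} are in force here: strong monotonicity \eqref{1} and continuity \eqref{2} of $A=-\Delta$ on $V(\Omega_{t})$ (the coercivity following from the Poincar\'e inequality on \eqref{35'} since ${\rm meas}(\Gamma^D_{t})>0$), continuity \eqref{3} and surjectivity of $B={\rm div}$ together with the LBB condition \eqref{12}, which hold by \cite[Theorem~7.2]{KO/88} when ${\rm meas}(\Gamma^N_{t})>0$ and which in particular make the saddle point $(u_{t},\lambda_{t})$ of \eqref{38} unique; and the bijectivity of the pull-back $v\mapsto v\circ\phi_s$ in \eqref{19a}--\eqref{19b}, which is a consequence of $\phi_s$ being a bi-Lipschitz diffeomorphism carrying $\Gamma^D_{t}$ onto $\Gamma^D_{t+s}$ with uniformly bounded $W^{1,\infty}_{\rm loc}$ norms of $\phi_s,\phi^{-1}_s$ from \eqref{13}.

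Next I would invoke the asymptotic expansions already derived in \eqref{42}: formula \eqref{42a} establishes \eqref{19d}--\eqref{19e} with $A^1$ given by \eqref{42b}; formula \eqref{42c} establishes \eqref{19f}--\eqref{19g} with $B^1$ given by \eqref{42d}, the surjectivity of $B+sB^1+B^2_s$ being inherited from the diffeomorphic change of variables; and formula \eqref{42e} establishes \eqref{19h}--\eqref{19i} with $f^1$ given by \eqref{42f}. The residual bounds \eqref{19e}, \eqref{19g}, \eqref{19i} follow from the ${\rm o}(s)$ estimates on $r^1_s,r^2_s$ in \eqref{41} and on $f_i\circ\phi_s$ via the regularity $f\in H^1_{\rm loc}$. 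With all the assumptions \eqref{19} verified, Theorem~\ref{theo2} yields that $t\mapsto\mathcal{E}(u_{t};\Omega_{t})$ is shape differentiable with shape derivative $\mathcal{L}^1(u_{t},\lambda_{t};\Omega_{t})$ given by \eqref{21}.

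It then remains to evaluate \eqref{21} explicitly. Substituting \eqref{42b} into \eqref{21b} and using the scalar identity $(\nabla v)^\top\nabla\Lambda\,\nabla v=(\nabla v)^\top(\nabla\Lambda)^\top\nabla v$, which halves the coefficient of the $\nabla\Lambda+(\nabla\Lambda)^\top$ contribution, the quadratic term $\frac12\langle A^1 u_{t},u_{t}\rangle_{\Omega_{t}}$ becomes $\int_{\Omega_{t}}\sum_i\bigl(\frac12({\rm div}\Lambda)|\nabla (u_{t})_i|^2-\sum_{k,j}(u_{t})_{i,k}\Lambda_{k,j}(u_{t})_{i,j}\bigr)\,dx$; adjoining $-\langle f^1,u_{t}\rangle_{\Omega_{t}}$ from \eqref{42f} produces exactly $\mathcal{E}^1(u_{t};\Omega_{t})$ as in \eqref{43b}. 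For the constraint term, \eqref{42d} gives $(\lambda_{t},B^1 u_{t})_{\Omega_{t}}=\int_{\Omega_{t}}\lambda_{t}\bigl\{({\rm div}\Lambda)({\rm div}\,u_{t})-\sum_{i,j}\Lambda_{j,i}(u_{t})_{i,j}\bigr\}\,dx$, and here I would use the state equation: since ${\rm div}\,u_{t}=0$ by \eqref{Sb}, the first summand drops, so $-(\lambda_{t},B^1 u_{t})_{\Omega_{t}}=\int_{\Omega_{t}}\lambda_{t}\sum_{i,j}\Lambda_{j,i}(u_{t})_{i,j}\,dx$. Assembling \eqref{21a} then gives the claimed formula \eqref{43a}.

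I expect the only genuinely delicate point to be the verification of the space bijections \eqref{19a}--\eqref{19b} under the mixed boundary conditions --- in particular that the flow $\phi_s$ preserves the Dirichlet part $\Gamma^D_{t}$ so that the boundary constraint $w=0$ on $\Gamma^D$ built into \eqref{35'} is respected by $v\mapsto v\circ\phi_s$ --- whereas the failure of the \emph{primal}-cone bijection \eqref{31} (the divergence being transported by \eqref{42c} into a genuine first-order perturbation) is precisely the obstruction that the Lagrangian reformulation via Corollary~\ref{cor2} circumvents; the remaining manipulations in \eqref{43} are routine algebra.
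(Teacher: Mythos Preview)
Your proposal is correct and follows essentially the same approach as the paper: the paper does not give a separate formal proof of Theorem~\ref{theo4} but derives it exactly as you do, by observing that the expansions \eqref{42} verify the assumptions \eqref{19}, applying Theorem~\ref{theo2} in the form of Corollary~\ref{cor2} (since $K^\star(\Omega_t)=L^2(\Omega_t)$ for the equality constraint), and then simplifying \eqref{21} via ${\rm div}\,u_t=0$. Your explicit verification of the algebra in \eqref{43}, in particular the halving of the $\nabla\Lambda+(\nabla\Lambda)^\top$ term by the symmetry $(\nabla v)^\top\nabla\Lambda\,\nabla v=(\nabla v)^\top(\nabla\Lambda)^\top\nabla v$, is exactly the computation the paper leaves implicit.
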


We remark the singularity at the intersection 
$\overline{\Gamma^D_{t}} \cap\overline{\Gamma^N_{t}}$ (see e.g. \cite{Ben/07}) 
such that $(u_{t},\lambda_{t})$ is generally not in 
$H^2(\Omega_{t};\mathbb{R}^d)\times H^1(\Omega_{t};\mathbb{R})$ 
as shown in \cite[Theorem 1.3.2]{Oht/18}. 
Let the singular points are contained locally in a domain 
$\overline{\omega_{t}}\subset \overline{\Omega_{t}}$ such that 
$(u_{t},\lambda_{t})\in 
H^2(\Omega_{t}\setminus\omega_{t};\mathbb{R}^d)\times 
H^1(\Omega_{t}\setminus\omega_{t};\mathbb{R})$, 
and $f,\Lambda\equiv$ const in $\omega_{t}$. 
In this case, using integration of \eqref{43} by parts we get 
the following expression over the boundary of $\Omega_{t}\setminus\omega_{t}$: 
\begin{multline*}
\mathcal{L}^1(u_{t},\lambda_{t};\Omega_{t}) 
=\int_{\partial(\Omega_{t}\setminus\omega_{t})} \sum_{i=1}^d \Bigl(
(\Lambda^\top n_{t}) \bigl( {\textstyle \frac{1}{2}}|\nabla (u_{t})_i|^2 -f_i (u_{t})_i \bigr)\\  
-(\Lambda^\top\nabla (u_{t})_i) \bigl( {\textstyle\frac{\partial}{\partial n_{t}}} (u_{t})_i 
-\lambda_{t} (n_{t})_i \bigr) \Bigr) \,dS_x,
\end{multline*}
which implies the generalized J-integral (see \cite{AOK/14,Oht/18}). 

In the case of $\Gamma^N_{t} =\emptyset$, to preserve the integral 
(see \eqref{L20}), this needs special area-preserving maps 
that form special linear group $SL(d)$ as stated in the last result. 

\begin{corollary}\label{cor3}
Let the problem \eqref{S} be stated under solely no-slip Dirichlet condition 
$u_{t} =0$ on $\partial\Omega_{t} = \Gamma^D_{t}$. 
If the transformation $y =\phi_s(x)$ is characterized 
by the Jacobian determinant ${\rm det}(\nabla \phi_s) =1$, 
then formula \eqref{43} in Theorem~\ref{theo4} still holds true 
with ${\rm div} \Lambda =0$. 
\end{corollary}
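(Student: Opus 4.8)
The plan is to derive Corollary~\ref{cor3} as a direct specialization of Theorem~\ref{theo4}. The only structural difference from the setting of Theorem~\ref{theo4} is that now $\Gamma^N_{t}=\emptyset$, so that $V(\Omega_{t})=H^1_0(\Omega_{t};\mathbb{R}^d)$ and, as noted after \eqref{L20}, the operator $B={\rm div}$ maps into $L^2_0(\Omega_{t};\mathbb{R})$ rather than onto all of $L^2(\Omega_{t};\mathbb{R})$. Consequently the bijection \eqref{19c} of dual cones fails \emph{unless} the transport $y=\phi_s(x)$ preserves the zero-mean condition $(\mu,1)_{\Omega_{t+s}}=0$. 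First I would observe that imposing ${\rm det}(\nabla\phi_s)=1$ forces, through the transformation formula \eqref{42c} (with $v$ replaced by the constant-in-the-last-factor computation, i.e. reading off the $\mu$-part), the identity $(\mu,1)_{\Omega_{t+s}}=\int_{\Omega_{t}}(\mu\circ\phi_s)\,{\rm det}(\nabla\phi_s)\,dx=\int_{\Omega_{t}}(\mu\circ\phi_s)\,dx=(\mu\circ\phi_s,1)_{\Omega_{t}}$, so that $[\mu\mapsto\mu\circ\phi_s]$ indeed maps $L^2_0(\Omega_{t+s};\mathbb{R})$ bijectively onto $L^2_0(\Omega_{t};\mathbb{R})$, restoring \eqref{19c}. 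The remaining hypotheses \eqref{19a}, \eqref{19b}, \eqref{19d}--\eqref{19i} are unchanged and were already verified in \eqref{42}, so Theorem~\ref{theo2} (equivalently Corollary~\ref{cor2}, extended to the present non-surjective $B$ via the restored bijection) applies and yields the shape derivative in the form \eqref{21}, i.e. formulas \eqref{43a}--\eqref{43b}.

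Next I would record the differential-geometric consequence of the volume-preserving constraint: differentiating ${\rm det}(\nabla\phi_s)=1$ at $s=0$ and comparing with the expansion $|\nabla\phi_s|=1+s\,{\rm div}\Lambda+r^2_s$ from \eqref{41} gives ${\rm div}\Lambda=0$ identically on the relevant region. Substituting ${\rm div}\Lambda=0$ into \eqref{43a}--\eqref{43b} simply deletes the two terms carrying the factor ${\rm div}\Lambda$, which is exactly the asserted reduced formula; no further computation is needed.

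The only point that needs a little care — and the main (mild) obstacle — is the justification that the abstract machinery of Section~\ref{sec2} still goes through when $B$ is not surjective onto $H(\Omega_{t})=L^2(\Omega_{t};\mathbb{R})$ but only onto the closed subspace $L^2_0(\Omega_{t};\mathbb{R})$. The clean remedy is to \emph{relabel the spaces}: set $H(\Omega_{t}):=L^2_0(\Omega_{t};\mathbb{R})$ and $H^\star(\Omega_{t}):=L^2_0(\Omega_{t};\mathbb{R})$ (the pressure being determined only up to a constant), so that $B:V(\Omega_{t})\to H(\Omega_{t})$ becomes surjective and the LBB condition \eqref{12} holds in this quotient setting, cf. \cite{GR/86,Lad/69}. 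With this identification the dual cone is $K^\star(\Omega_{t})=H^\star(\Omega_{t})=L^2_0(\Omega_{t};\mathbb{R})$, exactly the situation of Corollary~\ref{cor2}, and the bijection \eqref{19b} of $H(\Omega_{t+s})\mapsto H(\Omega_{t})$ is precisely the zero-mean preservation established in the first paragraph from ${\rm det}(\nabla\phi_s)=1$. Once this bookkeeping is in place, the proof is complete: apply Theorem~\ref{theo4}, then set ${\rm div}\Lambda=0$ in \eqref{43}.
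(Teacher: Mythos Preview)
Your proposal is correct and spells out precisely the argument the paper leaves implicit: the paper states Corollary~\ref{cor3} without proof, merely noting beforehand that area-preserving maps are needed to preserve the integral constraint \eqref{L20}, which is exactly the mechanism you identify (${\rm det}(\nabla\phi_s)=1$ forces $(\mu,1)_{\Omega_{t+s}}=(\mu\circ\phi_s,1)_{\Omega_{t}}$, restoring \eqref{19c}, and differentiation at $s=0$ via \eqref{41} gives ${\rm div}\Lambda=0$). Your relabeling $H(\Omega_{t})=H^\star(\Omega_{t})=L^2_0(\Omega_{t};\mathbb{R})$ to recover surjectivity of $B$ and the LBB condition is the natural way to make the abstract framework of Section~\ref{sec2} apply, and nothing further is required.
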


Examples of such area-preserving bijection are translation 
and rotation of bodies obeying circular or cylindrical symmetry 
that maps the body into itself. 

\section{Conclusion}\label{sec4}

The result of the shape sensitivity analysis is useful in structure 
optimization, see e.g. \cite{AJT/04}. 
In particular, a positive/ negative sign of the shape derivative 
forces respectively either increase or decay of the objective 
function $\mathcal{E}$ of the energy.

For further development in the shape differentiability of Lagrangians, 
we may suggest to combine Theorem~\ref{theo2} 
together with Corollary~\ref{cor2} in order to account simultaneously 
for both equality and inequality type constraints within polyhedral cones. 
The example is the Stokes problem under the threshold slip boundary condition, 
see \cite{LRT/07,RR/99}.

\vspace{5mm}\noindent {\bf Acknowledgment}. 
V.A.K. is supported by the Austrian Science Fund (FWF) 
project P26147-N26: "Object identification problems: numerical analysis" (PION) 
and the Austrian Academy of Sciences (OeAW).\\ 
K.O. is supported by the JSPS KAKENHI Grant Number 16K05285.\\
The joint work  began in CoMFoS15 that is the workshop by the Activity group 
MACM (Mathematical Aspects of Continuum Mechanics) of JSIAM. 
The authors thank two referees for the comments 
which helped to improve the manuscript.


\begin{thebibliography}{99}

\bibitem{AJT/04}
G. Allaire, F. Jouve and A.-M. Toader, 
Structural optimization using sensitivity analysis and a level-set method, 
{\em J. Comput. Phys.} {\bf 194} (2004), 363--393.

\bibitem{AOK/14}
H. Azegami, K. Ohtsuka and M. Kimura, 
Shape derivative of cost function for singular point: evaluation by the generalized J integral, {\em JSIAM Lett.} {\bf 6} (2014), 29--32.

\bibitem{Ben/07}
M. Bene\v{s},
The qualitative properties of the Stokes and Navier--Stokes system
 for the mixed problem in a nonsmooth domain,
{\em Math. Comput. Simulation} {\bf 76} (2007), 8--12. 

\bibitem{BS/00} 
J.F. Bonnans and A. Shapiro, 
{\em Perturbation Analysis of Optimization Problems}, 
Springer, New York, 2000.

\bibitem{Bre/10} 
H. Brezis, 
{\em Functional Analysis, Sobolev Spaces and Partial Differential Equations}, 
Springer, New York, 2010.

\bibitem{BMMW/10}
R. Brown, I. Mitrea, M. Mitrea and M. Wright, 
Mixed boundary value problems for the Stokes system, 
{\em Trans. Amer. Math. Soc.} {\bf 362} (2010), 1211--1230.

\bibitem{BHMS/09}
M. Bul\'{i}\v{c}ek, J. Haslinger, J. M\'{a}lek and J. Stebel, 
Shape optimization for Navier-Stokes equations with algebraic turbulence model: 
existence analysis, 
{\em Appl. Math. Optim.} {\bf 60} (2009), 185--212. 

\bibitem{Cia/13}
G.P. Ciarlet, 
{\em Linear and Nonlinear Functional Analysis with Applications},  
SIAM, Philadelphia, 2013.

\bibitem{CS/85} 
R. Correa and A. Seeger, 
Directional derivative of a minimax function,
{\em Nonlinear Anal. Theory Methods Appl.} {\bf 9} (1985), 834--862.

\bibitem{DZ/88}
M.C. Delfour and J.-P. Zol\'{e}sio, 
Shape sensitivity analysis via min max differentiability, 
{\em SIAM J. Control Optim.} {\bf 26} (1988), 1414--1442. 

\bibitem{DZ/06}
M.C. Delfour and J.-P. Zol\'{e}sio, 
Velocity method and Lagrangian formulation for the computation of the shape Hessian, 
{\em SIAM J. Control Optim.} {\bf 29} (2006), 1414--1442. 

\bibitem{DZ/11}
M.C. Delfour and J.-P. Zol\'{e}sio, 
{\em Shape and Geometries: Metrics, Analysis, Differential Calculus, and Optimization}, 
SIAM, Philadelphia, 2011. 

\bibitem{ET/76}
I. Ekeland and R. Temam, 
{\em Convex Analysis and Variational Problems}, 
North-Holland, Amsterdam, 1976.

\bibitem{GR/86}
V. Girault and P.-A. Raviart, 
{\em Finite Element Methods for Navier--Stokes Equations. Theory and Algorithms}, 
Springer, Berlin, 1986. 

\bibitem{HIKKP/09}
J. Haslinger, K. Ito, T. Kozubek, K. Kunisch and G. Peichl, 
On the shape derivative for problems of Bernoulli type, 
{\em Interfaces Free Bound.} {\bf 11} (2009), 317--330.

\bibitem{HSS/14}
J. Haslinger, J. Stebel and T. Sassi, 
Shape optimization for Stokes problem with threshold slip, 
{\em Appl. Math.} {\bf 59} (2014), 631--652. 

\bibitem{HK/11}
M. Hinterm\"uller and V.A. Kovtunenko,
From shape variation to topology changes in
constrained minimization: a velocity method-based concept, 
{\em Optimization Meth. Software} {\bf 26} (2011), 513--532.

\bibitem{IK/08}
K. Ito and K. Kunisch, 
{\em Lagrange Multiplier Approach to Variational Problems and Applications}, 
SIAM, Philadelphia, PA, 2008. 

\bibitem{IKRT/12}
H. Itou, A. M. Khludnev, E.M. Rudoy and A. Tani, 
Asymptotic behaviour at a tip of a rigid line inclusion in 
linearized elasticity, 
{\em Z. Angew. Math. Mech.} {\bf 92} (2012), 716--730. 

\bibitem{IKR/17}
H. Itou, V.A. Kovtunenko and K.R. Rajagopal, 
Nonlinear elasticity with limiting small strain for cracks subject to non-penetration, 
{\em Math. Mech. Solids} {\bf 22} (2017), 1334--1346. 

\bibitem{IKT/11}
H. Itou, V.A. Kovtunenko and A. Tani,
The interface crack with Coulomb friction between two bonded
dissimilar elastic media, 
{\em Appl. Math.} {\bf 56} (2011), 69--97.

\bibitem{KK/00}
A.M. Khludnev and V.A. Kovtunenko, 
{\em Analysis of Cracks in Solids}, 
WIT-Press, Southampton, Boston, 2000.

\bibitem{KKT/08}
A.M. Khludnev, V.A. Kovtunenko, A. Tani,
Evolution of a crack with kink and non-penetration, 
{\em J. Math. Soc. Japan} {\bf 60} (2008), 1219--1253. 

\bibitem{KKT/10}
A.M. Khludnev, V.A. Kovtunenko, A. Tani,
On the topological derivative due to kink of a crack with
non-penetration. Anti-plane model, 
{\em J. Math. Pures Appl.} {\bf 94} (2010), 571--596.

\bibitem{KS/97}
A.M. Khludnev and J. Sokolowski,
{\em Modelling and Control in Solid Mechanics}, 
Birkh\"auser, Basel, 1997. 

\bibitem{KOS/02}
A.M. Khludnev, K. Ohtsuka and J. Sokolowski,
On derivative of energy functional for elastic bodies with cracks
and unilateral conditions,
{\em Quart. Appl. Math.} {\bf 60} (2002), 99--109.

\bibitem{KO/88}
N. Kikuchi and J.T. Oden,  
{\em Contact Problems in Elasticity: 
a Study of Variational Inequalities and Finite Element Methods}, 
SIAM, Philadelphia, PA, 1988.

\bibitem{Kov/06}
V.A. Kovtunenko,
Primal-dual methods of shape sensitivity analysis for curvilinear
cracks with non-penetration, 
{\em IMA J. Appl. Math.} {\bf 71} (2006), 635--657. 

\bibitem{KK/07}
V.A. Kovtunenko and K. Kunisch, 
Problem of crack perturbation based on level sets and velocities.
{\em Z. angew. Math. Mech.} {\bf 87} (2007), 809--830. 

\bibitem{KK/14}
V.A. Kovtunenko and K. Kunisch, 
High precision identification of an object: optimality conditions based concept of imaging, 
{\em SIAM J. Control Optim.} {\bf 52} (2014), 773--796. 

\bibitem{KKR/09}
V.A. Kovtunenko, K. Kunisch and W. Ring,
Propagation and bifurcation of cracks based on implicit surfaces
and discontinuous velocities,
{\em Comput. Visual Sci.} {\bf 12} (2009), 397--408. 

\bibitem{KL/16}
V.A. Kovtunenko and G. Leugering, 
A shape-topological control problem for nonlinear crack - defect interaction: 
the anti-plane variational model, 
{\em SIAM J. Control. Optim.}  {\bf 54} (2016), 1329--1351. 

\bibitem{KMR/01}
V.A. Kozlov, V.G. Maz�ya and J. Rossmann, 
{\em Spectral Problems Associated with Corner Singularities 
of Solutions to Elliptic Equations}, 
AMS, Providence, 2001. 

\bibitem{Lad/69}
O.A. Ladyzhenskaya,
{\em The Mathematical Theory of Viscous Incompressible Flow}, 
Science Publishers, New York, 1969.

\bibitem{LR/14}
 N.P. Lazarev and E.M. Rudoy, 
Shape sensitivity analysis of Timoshenko's plate with a crack 
under the nonpenetration condition. 
{\em Z. angew. Math. Mech.} {\bf 94} (2014), 730--739. 

\bibitem{LSZ/14}
G. Leugering, J. Sokolowski and A. Zochowski,
Shape-topological differentiability of energy functionals for 
unilateral problems in domains with cracks and applications, 
In: {\em Optimization with PDE Constraints; ESF Networking Program 'OPTPDE'}, 
R.~Hoppe, ed. (2014), 203--221.

\bibitem{LRT/07}
C. Le Roux and A. Tani, 
Steady solutions of the Navier--Stokes equations 
with threshold slip boundary conditions, 
{\em Math. Meth. Appl. Sci.} {\bf 30} (2007), 595--624.

\bibitem{MKAOA/15}
A.U. Maharani, M. Kimura, H. Azegami, K. Ohtsuka and I. Armanda, 
Shape optimization approach to a free boundary problem,  
{\em Recent Development Comput. Sci.} {\bf 6}, 
Kanazawa e-Publishing (2015), 42--55. 

\bibitem{MZ/06} 
M. Moubachir and J.-P. Zol\'{e}sio, 
{\em Moving Shape Analysis and Control}, 
Chapman \& Hall/CRC, Boca Raton, 2006.

\bibitem{Oht/18} 
K. Ohtsuka, 
Shape optimization by generalized J-integral in Poisson’s equation 
with a mixed boundary condition, 
In: {\em Math. Anal. Cont. Mech. Ind. Appl. II (Proc. CoMFoS16)}, 
P. van~Meurs, M. Kimura, H. Notsu, eds., Springer (2018), 73--83.

\bibitem{OK/12}
K. Ohtsuka and M. Kimura, 
Differentiability of potential energies with a parameter and shape 
sensitivity analysis for nonlinear case: the p-Poisson problem, 
{\em Jpn. J. Ind. Appl. Math.} {\bf 29} (2012), 23--35. 

\bibitem{OG/15}
N. Ovcharova and J. Gwinner, 
From solvability and approximation of variational inequalities to 
solution of nondifferentiable optimization problems in contact mechanics,  
{\em Optimization} {\bf 64} (2015), 1683--1702. 

\bibitem{RR/99}
I.J. Rao and K.R. Rajagopal, 
The effect of the slip boundary condition on the flow of fluids in a channel, 
{\em Acta Mechanica} {\bf 135} (1999), 113--126.

\bibitem{Roc/70}
R.T. Rockafellar, 
{\em Convex Analysis}, 
Princeton Univ. Press, 1970. 

\bibitem{Rud/04}
E.M. Rudoy, 
Differentiation of energy functionals in two-dimensional elasticity theory 
for solids with curvilinear cracks, 
{\em J. Appl. Mech. Techn. Phys.} {\bf 54} (2004), 843--852. 

\bibitem{Shch/17}
V.V. Shcherbakov, 
Shape derivative of the energy functional for the bending of elastic plates with thin defects. 
{\em J. Phys.: Conf. Ser.} {\bf 894} (2017), 012084. 

\bibitem{SZ/92}
J. Sokolowski and J.-P. Zol\'{e}sio, 
{\em Introduction to Shape Optimization. Shape Sensitivity Analysis}, 
Springer, Berlin, Heidelberg, 1992.

\end{thebibliography}
\end{document}